\documentclass{article}

\usepackage[utf8]{inputenc} 
\usepackage[T1]{fontenc}    
\usepackage{hyperref}       
\usepackage{url}            
\usepackage{booktabs}       
\usepackage{amsfonts}       
\usepackage{nicefrac}       
\usepackage{microtype}      

\usepackage{times}
\usepackage{graphicx} 

\usepackage[square,sort,comma,numbers]{natbib}
\usepackage{algorithm}
\usepackage{algorithmic}


\usepackage{amsmath}
\usepackage{amssymb}
\usepackage{relsize}
\usepackage{dsfont}
\usepackage{amsthm}
\usepackage{caption}
\usepackage{subcaption}
\usepackage{bm}
\usepackage{refcount}
\usepackage{multirow}
\usepackage{setspace}
\usepackage{lipsum}

\newcommand{\Expmap}[2]{\ensuremath{\text{Exp}_{#1}(#2)}}
\newcommand{\Expmapinv}[2]{\ensuremath{\text{Exp}^{-1}_{#1}(#2)}}
\newcommand{\Tanspace}{\ensuremath{T_{x}\mathcal{M}}}
\newcommand{\Sqnorm}[1]{\ensuremath{\|#1\|^{2}}}

\DeclareMathOperator*{\argmin}{\arg\!\min}

\newtheorem{lemma}{Lemma}
\newtheorem{proposition}{Proposition}

\title{Accelerated Stochastic Quasi-Newton Optimization on Riemannian Manifolds}

\author{
  Anirban Roychowdhury\\
  Department of Computer Science and Engineering\\
  Ohio State University\\
  Columbus, OH 43210 \\
  \texttt{roychowdhury.7@osu.edu} \\
}

\begin{document}

\maketitle

\begin{abstract}
We propose an L-BFGS  optimization algorithm on Riemannian manifolds using minibatched stochastic variance reduction techniques for fast convergence with constant step sizes, without resorting to linesearch methods designed to satisfy Wolfe conditions. We provide a new convergence proof for strongly convex functions without using curvature conditions on the manifold, as well as a convergence discussion for nonconvex functions. We discuss a couple of ways to obtain the correction pairs used to calculate the product of the gradient with the inverse Hessian, and empirically demonstrate their use in synthetic experiments on computation of Karcher means for symmetric positive definite matrices and leading eigenvalues of large scale data matrices. We compare our method to VR-PCA for the latter experiment, along with Riemannian SVRG for both cases, and show strong convergence results for a range of datasets.
\end{abstract}

\section{Introduction} 

Optimization algorithms are a mainstay in machine learning research, underpinning solvers for a wide swath of problems ranging from linear regression and SVMs to deep learning. Consequently, scaling such algorithms to large scale datasets while preserving theoretical guarantees is of paramount importance. An important challenge in this field is designing scalable algorithms for optimization problems in the presence of constraints on the search space, a situation all too often encountered in real life.
One approach to handling such constrained optimization problems on vector spaces is to reformulate them as optimization tasks on a suitable Riemannian manifold, with the constraints acting as manifold parametrization. Often, the problems can be shown to possess desirable geometric properties like convexity with respect to distance-minimizing geodesics on the manifold, leading to provably efficient optimization algorithms \cite{absilbook, zhangcolt16, conicsuvrit, ring_wirth}. These ideas can then be combined with stochastic optimization techniques influenced by \cite{robbins_monro}, to deal with large datasets with theoretical convergence guarantees. See \cite{bonnabel, zhangnips16} for recent examples. For instance, we can consider the problem of computing leading eigenvectors in the PCA setting \cite{shamiricml15} with unit-norm constraints. Projection-based strategies are normally used for this kind of problems \cite{oja}, but alternating between solving and projecting can be prohibitively expensive in high dimensions. However, the unit-norm constraint can be used to cast the eigenvector problem into an unconstrained optimization scenario on the unit sphere, which happens to be one of the most well-behaved Riemannian manifolds.

Once the problems have been cast onto manifolds, one would want fast optimization algorithms that potentially use stochastic minibatches to deal with very large datasets. Such algorithms operating in Euclidean space have been widely researched in the optimization literature, but their development for Riemannian manifolds has been limited so far. In particular, one should note that the convergence speed limitations of unconstrained stochastic algorithms in the Euclidean case apply to manifold optimization as well; for instance a straightforward port of stochastic gradient descent to Riemannian manifolds \cite{zhangcolt16} attains the same sublinear convergence seen in Euclidean space. There has been extensive work in the Euclidean domain using variance-reduced gradients to address this issue, with the aim of improving convergence rates by explicitly reducing the variance of stochastic gradients with suitably spaced full-gradient evaluations \cite{shamiricml15,rienips13}. Another nice advantage of this technique is the removal of the need for decaying learning rates for proving convergence, thereby solving the sublinearity issue as well sidestepping the nontrivial task of selecting an appropriate decay rate for SGD-like algorithms in large-scale optimization scenarios. Researchers have begun porting these methods to the manifold optimization domain, with a stochastic first-order variance reduced technique \cite{zhangnips16} showing robust convergence guarantees for both convex and nonconvex problems on geodesically complete manifolds.

Another complementary approach to improving convergence rates is of course using second-order updates for the iterates. In the Euclidean setting, one can show quadratic convergence rates for convex problems using Newton iterations, but these tend to be prohibitively expensive in high-dimensional big-data settings due to the need to store and invert the Hessian matrix.  
This limitation has led to the development of quasi-Newton methods, most notably L-BFGS \cite{liulbfgs}, which uses lower-order terms to approximate the inverse Hessian. The curvature information provided by the Hessian estimate allows superlinear convergence in ideal settings \cite{nocedal_wright}. While widely used for small-to-medium scale problems, adoption of these methods for big data problems has been limited, since the second order updates can be prohibitively expensive to compute in these situations. However, most optimization algorithms in the literature that use stochastic minibatching techniques to deal with large datasets are modifications of first order gradient-descent \cite{bottounips04,bottouiccs10} with relatively slower convergence in practical situations.
This has recently begun to be addressed, with researchers devising stochastic variants of the L-BFGS technique \cite{byrdarxiv14}, with straightforward convergence analyses. This has also been combined with variance reduction techniques and shown to have a linear convergence rate for convex problems in Euclidean space \cite{moritzaist16}. Our work in this paper is in a similar vein: we study quasi-Newton L-BFGS updates with stochastic variance reduction techniques for optimization problems on Riemannian manifolds, and analyze their convergence behavior for convex and nonconvex functions.

\noindent {\bf Contributions:} The main contributions of this work may be summarized as follows:

\textbf{1.} We propose a stochastic L-BFGS method for Riemannian manifolds using stochastic variance reduction techniques for the first-order gradient estimates, and analyze the convergence for both convex and nonconvex functions under standard assumptions.

\textbf{2.} Our proof for strongly convex functions is different from those of recently proposed stochastic L-BFGS algorithms using variance-reduced gradients in Euclidean space \cite{moritzaist16} due to different bounds on the stochastic gradients. We do not use sectional curvature bounds in our proof for the convex case, making it structurally different from that of Riemannian SVRG \cite{zhangnips16}. 

\textbf{3.} We show strong experimental results on Karcher mean computations and calculation of leading eigenvalues, with noticeably better performance than Riemannian SVRG and VR-PCA; the latter is one of the best performing Euclidean algorithms for the finding dominant eigenvalues, that also uses stochastic variance-reduced gradients.


\section{Preliminaries} 
\label{prelims}
\subsection{Riemannian geometry} \label{prelimone} We begin with a brief overview of the differential geometric concepts we use in  this work. We consider $C^{\infty}$ (smooth) manifolds that are locally homeomorphic to open subsets of $\mathbb{R}^{D}$, in the sense that the neighborhood of each point can be assigned a system of coordinates of appropriate dimensionality. Formally, this is defined with the notion of a \textit{chart} $c:U\rightarrow\mathbb{R}^{D}$ at each $x\in\mathcal{M}$, where $U\subset\mathcal{M}$ is an open subspace containing $x$. Smooth manifolds are ones with covering collections of differentiable ($C^{\infty}$) charts. A \textit{Riemannian metric} $g(\cdot,\cdot)$ is a bilinear $C^{\infty}$ tensor field of type $0\choose 2$, that is also symmetric and positive definite. A manifold endowed with such a metric is called a Riemannian manifold. The tangent space $T_{x}\mathcal{M}$ at every $x\in\mathcal{M}$ is a vector space, with the Riemannian metric $g:T_{x}\mathcal{M}\times T_{x}\mathcal{M}\rightarrow\mathbb{R}$ as the attendant metric. $g$ then induces a norm for vectors in the tangent space , which we denote by $\|\cdot\|$.

Riemannian manifolds are endowed with the Levi-Civita connection, which induces the notion of parallel transport of vectors from one tangent space to another along a geodesic, in a metric preserving way. That is, we have an operator $\Gamma_{\gamma}:T_{x}\mathcal{M}\rightarrow T_{y}\mathcal{M}$ where, informally speaking, $\gamma$ joins $x\text{ and }y$, and for any $u,\nu\in T_{x}\mathcal{M}$, we have $g(u,\nu)=g(\Gamma(u), \Gamma(\nu))$ . The parallel transport can be shown to be an isometry.

For every smooth curve $\gamma:[0,1]\rightarrow\mathcal{M}$ lying in $\mathcal{M}$, we denote its velocity vector as $\dot{\gamma}(t)\in T_{x}\mathcal{M}$ for each $t\in[0,1]$, with the ``speed'' given by $\|\dot{\gamma}(t)\|$. The length of such a curve is usually measured as $L(\gamma) = \int\limits_{0}^{1}\|\dot{\gamma}(t)\|dt.$ Denoting the covariant derivative along $\gamma$ of some $\nu\in T_{x}\mathcal{M}$, with respect to the Riemannian (Levi-Civita) connection by $A\nu$, we call $A\dot{\gamma}$ the \textit{acceleration} of the curve. Curves with constant velocities ($A\dot{\gamma}\equiv 0$) are called \textit{geodesics}, and can be shown to generalize the notion of Euclidean straight lines. We assume that every pair $x,y\in\mathcal{M}$ can be connected by a geodesic $\gamma$ s.t. $\gamma(0)=x\text{ and }\gamma(1)=y$. Immediately we have the notion of ``distance'' between any $x,y\in\mathcal{M}$ as the minimum length of all geodesics connecting $x\text{ and }y$, assuming the manifolds are \textit{geodesically complete} as mentioned above, in that every countable decreasing sequence of lengths of geodesics connecting a pair of points has a well-defined limit. 

The geodesic induces a useful operator called the \textit{exponential map}, defined as $\text{Exp}_{x}:T_{x}\mathcal{M}\rightarrow\mathcal{M}\text{ s.t. }\Expmap{x}{\nu}=\gamma(1)\text{ where }\gamma(0)=x,\gamma(1)=y\text{ and }\dot{\gamma}(0)=\nu$. If there is a unique geodesic connecting $x\text{ and }y$, then the exponential map has an inverse, denoted by $\Expmapinv{x}{y}$. The length of this geodesic can therefore be seen to be $\|\Expmapinv{x}{y}\|$.

The derivative $D$ of a differentiable function is defined using the Riemannian connection by the following equivalence: $Df(x)\nu=\nu f$, where $\nu\in T_{x}\mathcal{M}$. Then, by the Riesz representation theorem, there exists a gradient $\nabla f(x)\in T_{x}\mathcal{M}$ s.t. $\forall\nu\in T_{x}\mathcal{M},\>Df(x)\nu=g_{x}(\nabla f(x),\nu)$. Similarly, we can denote the Hessian as follows:  $D^{2}f(x)(\cdot,\cdot):T_{x}\mathcal{M}\times T_{x}\mathcal{M}\rightarrow\mathbb{R}$. We denote the mapping from $\nu\in T_{x}\mathcal{M}$ to the Riesz representation of $D^{2}f(x)(\nu,\cdot)$ by $\nabla^{2}f(x)$. One can consult standard textbooks on differential geometry \cite{leebook, boothby} for more details.

\subsection{Convexity and Lipschitz smoothness on manifolds} Similar to \cite{zhangcolt16,conicsuvrit,zhangnips16}, we define manifold (or geodesic) convexity concepts analogous to the Euclidean baselines, as follows : a set $U\subset\mathcal{M}$ is convex on the manifold if $\forall x,y\in U$ there exists a geodesic $\gamma$ connecting $x,y$ that completely lies in $U$, i.e. $\gamma(0)=x,\gamma(1)=y$. Then, a function can be defined as convex w.r.t. geodesics if $\forall x,y\in U\text{ where }\exists\gamma$ connecting $x,y$ on the manifold, we have:
\begin{equation*}
f(\gamma(t))\leq tf(x)+(1-t)f(y)\>\forall t\in[0,1].
\end{equation*}

We can also define a notion of strong convexity as follows: a function $f$ is called $S-$strongly convex if for any $x,y\in U$ and (sub)gradient $\nabla_{x}$, we have
\begin{equation}
\label{eqn:sconvex}
f(y)\geq f(x)+g_{x}(\nabla_{x},\Expmapinv{x}{y})+\frac{S}{2}\Sqnorm{\Expmapinv{x}{y}}.
\end{equation}

We define Lipschitz smoothness of a function $f$ by imposing Lipschitz continuity on the gradients, as follows: $\forall x,y\in U,$
\begin{equation*}
\|\nabla(x)-\Gamma_{\gamma}\nabla(y)\|\leq L\|\Expmapinv{x}{y}\|,
\end{equation*} where L is the smoothness parameter. Analogous to the Euclidean case, this property can also be formulated as::
\begin{equation}
\label{eqn:lsmooth}
f(y)\leq f(x)+g_{x}(\nabla_{x},\Expmapinv{x}{y})+\frac{L}{2}\Sqnorm{\Expmapinv{x}{y}}.
\end{equation}

\section{Stochastic Riemannian L-BFGS} 
In this section we present our stochastic variance-reduced L-BFGS algorithm on Riemannian manifolds and analyze the convergence behavior for convex and nonconvex differentiable functions on Riemannian manifolds. We assume these manifolds to be $L$-Lipschitz smooth, as defined above,  with existence of unique distance-minimizing geodesics between every two points, i.e. our manifolds are geodesically complete; this allows us to have a well-defined inverse exponential map that encodes the distance between a pair of points on the manifold. For the convergence analysis, we also assume $f$ to have a unique minimum at $x^{*}\in U$, where $U$ is a compact convex subset of the manifold. 

\subsection{The Algorithm} 
The pseudocode is shown in Algorithm~\ref{alg:srlbfgs}. We provide a brief discussion of the salient properties, and compare it to similar algorithms in the Euclidean domain, for example \cite{byrdarxiv14, moritzaist16}, as well as those on Riemannian manifolds, for example \cite{conicsuvrit}. To begin, note that $\nabla$ denotes the Riesz representation of the gradient $D$, as defined in $\S$\ref{prelimone}. We denote full gradients by $\nabla$ and stochastic gradients by $\tilde{\nabla}$. Similar to other stochastic algorithms with variance-reduction, we use two loops: each iteration of the inner loop corresponds to drawing one minibatch from the data and performing the stochastic gradient computations (Steps $10$, $11$), whereas each outer loop iteration corresponds to two passes over the full dataset, once to compute the full gradient (Step $6$) and the other to make multiple minibatch runs (Steps $8$ through $30$). Compared to the Euclidean setting, note that the computation of the variance-reduced gradient in Step $11$ involves an extra step: the gradients ($\nabla f(x)$-s) reside in the tangent spaces of the respective iterates, therefore we have to perform parallel transport to bring them to the same tangent space before performing linear combinations.

To avoid memory issues and complications arising from Hessian-vector computations in the Riemann setting, we chose to update the second correction variable $y_{r}$ using a simple difference of gradients approximation: $y_{r}=\tilde{\nabla}f(u_{r})-\Gamma_{\gamma}\tilde{\nabla}f(u_{r-1})$.
We should note here that the parallel transport parametrization should be clear from the context; $\Gamma_{\gamma}$ here denotes transporting the vector $\tilde{\nabla}f(u_{r-1})\in T_{u_{r-1}}\mathcal{M}$ to $T_{u_{r}}\mathcal{M}$ along the connecting geodesic $\gamma$. We omit any relevant annotations from the transport symbol to prevent notational overload. 
We calculate the first correction pair $z_{r}$ in one of two ways: (a) as $z_{r}=\Gamma_{\gamma}\left(\eta_{2}\rho_{t-1}\right)$, or (b) as $z_{r}=\Gamma_{\gamma}\left(-\eta_{1}\nu_{\text{prev}}\right)$. We denote these by \textbf{Option} $\mathbf{1}$ and \textbf{Option} $\mathbf{2}$ respectively in Alg. \ref{alg:srlbfgs}. Note that in both cases, $\Gamma_{\gamma}$ denotes the parallel transport of the argument to the tangent space at $x_{t}^{s+1}$. In our experiments, we noticed faster convergence for the strongly convex centroid computation problem with \textbf{Option} $\mathbf{1}$, along with computation of the correction pairs every iteration and a low memory pool. For calculating dominating eigenvalues on the unit-radius sphere, \textbf{Option} $\mathbf{2}$ yielded better results. Once the corrections pairs $z_{r}, y_{r}$ have been computed, we compute the descent step in Step 21 using the standard two-loop recursion formula given in \cite{nocedal_wright}, using the $M$ correction pairs stored in memory. Note that we use fixed stepsize in the update steps and in computing the correction pairs, and do not impose or perform calculations designed to have them satisfy Armijo or Wolfe conditions.
\begin{algorithm}[p]
   \caption{Riemannian Stochastic VR L-BFGS}
   \label{alg:srlbfgs}
   \begin{spacing}{1.2}
\begin{algorithmic}[1]\small
   \STATE {\bfseries Input:} Initial value $x^{0}$, parameters $M\text{ and }R$, learning rates $\eta_{1},\eta_{2}$, minibatch size $mb$.
   \STATE Initialize $c = 1$;
   \STATE Set $r=0$;
   \STATE Initialize $H_{0}$;
   \FOR{$t=0,1,\ldots$}
   \STATE Set $x_{0}^{t+1}=x^{s}$;
   \STATE Compute full gradient $g^{t+1}=N^{-1}\sum_{i=1}^{N}\nabla f_{i}(x^{t})$;
   \FOR{$i=0,1,\ldots m-1$}
   \STATE Sample minibatch $I_{i,mb}\subset{1,\ldots,N}$;
   \STATE Compute $\tilde{\nabla}f(x_{i}^{i+1})\text{ and }\tilde{\nabla}f(x^{i})$ using $I_{i,mb}$;
   \STATE Set $\nu_{i}^{t+1}=\tilde{\nabla}f(x_{i}^{t+1})-\Gamma_{\gamma}(\tilde{\nabla}f(x^{t})-g^{t+1})$;
   \IF{$c\equiv 0 \mod R$}
   \STATE Set $r=r+1$;
   \IF{$r\geq 2$}
   \STATE Set $u_{r}^{t+1}=x_{i}^{t+1}$;
   \STATE \textbf{Option 1}: Compute $z_{r}^{t+1}=\Gamma_{\gamma}\left(\eta_{2}\rho_{i-1}^{t+1}\right)$;
   \STATE \textbf{Option 2}: Compute $z_{r}^{t+1}=\Gamma_{\gamma}\left(-\eta_{1}\nu_{\text{prev}}\right)$;
   \STATE Compute $y_{r}^{t+1}=\tilde{\nabla}f(u_{r}^{t+1})-\Gamma_{\gamma}\tilde{\nabla}f(u_{i-1}^{t+1})$ using $I_{i,mb}$;
   \STATE Store correction pairs $z_{r}^{t+1}$ and $y_{r}^{t+1}$, using $r$ to maintain memory depth $M$;
   \ENDIF
   \STATE Set $x_{\text{prev}}=x_{i}^{t+1}$, $\nu_{\text{prev}}=\nu_{i}^{t+1}$;
   \ENDIF
   \IF{$c < 2R$}
   \STATE Set $x_{i+1}^{t+1}=\Expmap{x_{i}^{t+1}}{-\eta_{1}\nu_{i}^{t+1}}$;
   \ELSE
   \STATE Compute $\rho_{i}^{t+1}=H_{r}^{t+1}\nu_{i}^{t+1}$, as mentioned in the text;
   \STATE Set $x_{i+1}^{t+1}=\Expmap{x_{i}^{t+1}}{\eta_{2}\rho_{i}^{t+1}}$;
   \ENDIF
   \STATE Set $c=c+1$;
   \ENDFOR
   \STATE Set $x^{t+1}=x_{m}^{t+1}$;
   \ENDFOR
\end{algorithmic}
\end{spacing}
\end{algorithm}
Compared to the Euclidean algorithms \cite{byrdarxiv14, moritzaist16}, Alg.\ref{alg:srlbfgs} has some key differences: 1) we did not notice any significant advantage from using separate minibatches in Steps $10$ and $18$, therefore we use the same minibatch to compute the VR gradient and the correction elements $y_{r}$; 2) we do not keep a running average of the iterates for computing the correction element $z_{r}$ (Steps $15$ through $17$); 3) we use constant stepsizes throughout the whole process, in contrast to \cite{byrdarxiv14} that uses a decaying sequence. Note that, as seen in Step $24$, we use the first-order VR gradient to update the iterates for the first $2R$ iterations; this is because we calculate correction pairs every $R$ steps and evaluate the gradient-inverse Hessian product (Step $26$) once at least two pairs have been collected. Similar to \cite{nocedal_wright}, we drop the oldest pair to maintain the memory depth $M$. Compared to the algorithms in \cite{conicsuvrit, reshadnips15}, ours uses stochastic VR gradients, with all the attendant modifications and advantages, and does not use linesearch techniques to satisfy Wolfe conditions.

\subsection{Analysis of convergence} 
In this section we provide the main convergence results of the algorithm. We analyze convergence for finite-sum empirical risk minimization problems of the following form:
\begin{equation}
\label{eq:fsum}
\min_{x\in \mathcal{M}} f(x) = \frac{1}{N}\sum\limits_{i=1}^{N}f_{i}(x),
\end{equation} where the Riemannian manifold is denoted by $\mathcal{M}$. 
Note that the iterates are updated in Algorithm \ref{alg:srlbfgs} by taking the exponential map of the descent step multiplied by the stepsize, with the descent step computed as the product of the inverse Hessian estimate and the stochastic variance-reduced gradient using the standard two-loop recursion formula. Thus, to bound the optimization error using the iterates, we will need bounds on both the stochastic gradients and the inverse Hessians. As mentioned in \cite{zhangnips16}, the methods used to derive the former bounds for Euclidean algorithms cannot be ported directly to manifolds due to metric nonlinearities; see the proof of Proposition \ref{prop:one} for details. For the latter, we follow the standard template for L-BFGS algorithms in the literature \cite{ring_wirth,nocedal_wright,byrdarxiv14}. To begin, we make the following assumptions: 

\textbf{ Assumption 1.} The function $f$ in \eqref{eq:fsum} is strongly convex on the manifold, whereas the $f_{i}$s are individually convex.

\textbf{ Assumption 2.} There exist $\lambda,\Lambda\in (0,\infty),\>\lambda<\Lambda$ s.t. $\lambda\|\nu\|_{x}^{2}\leq D^{2}f\leq \Lambda\|\nu\|_{x}^{2}\quad\forall\nu\in T_{x}\mathcal{M}$.

These two assumptions allow us to (a) guarantee that $f$ has a unique minimizer $x^{*}$ in the convex sublevel set $U$, and (b) derive bounds on the inverse Hessian updates using BFGS update formulae for the Hessian approximations. Similar to the Euclidean case, these can be written as follows:
\begin{equation}
\label{eqn:bfgsone}
\hat{B}_{r}=\Gamma_{\gamma}\left[\hat{B}_{r-1}-\frac{B_{r-1}(s_{r-1},\cdot)\hat{B}_{r-1}s_{r-1}}{B_{r-1}(s_{r-1},s_{r-1})}\right]\Gamma_{\gamma}^{-1},
\end{equation} and by the Sherman-Morrison-Woodbury lemma, that of the inverse:
\begin{equation*}
H_{r} = \Gamma_{\gamma}\left[G^{-1}H_{r-1}G+\frac{g_{x_{r-1}}(s_{r-1},\cdot)s_{r-1}}{y_{r-1}s_{r-1}}\right]\Gamma_{\gamma}^{-1},
\end{equation*} where $G = I-\frac{g_{x_{r-1}}(s_{r-1},\cdot)\hat{y}_{r-1}}{y_{r-1}s_{r-1}}$, and $\hat{B}_{r}$ is the Lax-Milgram representation of the Hessian. Details on these constructs can be found in \cite{ring_wirth}, in addition to \cite{leebook,boothby}.

\subsubsection{Trace and determinant bounds}
To start off our convergence discussions for both convex and nonconvex cases, we derive bounds for the trace and determinants of the Hessian approximations, followed by those for their inverses. The techniques used to do so are straightforward ports of the Euclidean originals \cite{nocedal_wright}, with some minor modifications to account for differential geometric technicalities. Using the assumptions above, we can prove the following bounds \cite{ring_wirth}:
\begin{lemma}
\label{lem:trdet}
Let $B_{r}^{s+1}=\left(H_{r}^{s+1}\right)^{-1}$ be the approximation of the Hessian generated by Algorithm~\ref{alg:srlbfgs}, and $\hat{B}_{r}^{s+1}$ and $\hat{H}_{r}^{s+1}$ be the corresponding Lax-Milgram representations. Let $M$, the memory parameter, be the number of correction pairs used to update the inverse Hessian approximation. Then, under Assumptions 1 and 2, we have:
\begin{align*}
\mathrm{tr}(\hat{B}_{r}^{s+1})\leq \mathrm{tr}(\hat{B}_{0}^{s+1})+M\Lambda,\quad \det(\hat{B}_{r}^{s+1})&\geq \det(\hat{B}_{0}^{s+1})\frac{\lambda^{M}}{(\mathrm{tr}(\hat{B}_{0}^{s+1})+\Lambda M)^{M}}.
\end{align*} Also, $\gamma I\preceq \hat{H}_{r}^{s+1}\preceq \Gamma I $, for some $\Gamma\geq\gamma>0$.
\end{lemma} From a notational perspective, recall that our notation for the parallel transport operator is $\Gamma_{\gamma}$, with the subscript denoting the geodesic. The symbols $\gamma$ and $\Gamma$ in Lemma \ref{lem:trdet} above are unrelated to these geometric concepts, merely being the derived bounds on the eigenvalues of inverse Hessian approximations. The proof is given in the supplementary for completeness.

\subsubsection{Convergence result for strongly convex functions}
Our convergence result for strongly convex functions on the manifold can be stated as follows:
\begin{proposition}
\label{prop:one}
Let the Assumptions 1 and 2 hold. Further, let the $f(\cdot)$ in \eqref{eq:fsum} be S-strongly convex, and each of the $f_{i}$ be L-smooth, as defined earlier. Define the following constants: $p=\left[LS^{-1}+2\eta_{2}S^{-1}\left\lbrace2\eta L^{3}\Gamma^{2}-S\kappa\gamma\right\rbrace\right]$, and $q^{\prime}=6\eta^{2}L^{3}\Gamma^{2}S^{-1}$. Denote the global optimum by $x^{*}$. Then the iterate $x^{T+1}$ obtained after $T$ outer loop iterations will satisfy the following condition:
\begin{align*}
\mathbb{E}\left[f(x^{T+1})-f(x^{*})\right]\leq LS^{-1}\beta^{T}\mathbb{E}\left[f(x^{0})-f(x^{*})\right],
\end{align*} where the constants are chosen to satisfy $\beta=\left(1-p\right)^{-1}\left(q^{\prime}+p^{T}(1-p-q^{\prime})\right)<1$ for linear convergence.
\end{proposition}
For proving this statement, we will use the $L$-smoothness \eqref{eqn:lsmooth} and $S$-strong convexity \eqref{eqn:sconvex} conditions mentioned earlier. As in the Euclidean case \cite{rienips13, moritzaist16}, we will also require a bound on the stochastic variance-reduced gradients. These can be bounded using triangle inequalities and $L$-smoothness on Riemannian manifolds, as shown in \cite{zhangnips16}. This alternative is necessary since the Euclidean bound first derived in \cite{rienips13}, using the difference of the objective function at the iterates, cannot be ported directly to manifolds due to metric nonlinearities. Thus we take a different approach in our proof compared to the Euclidean case of \cite{moritzaist16}, using the interpoint distances defined with the norms of inverse exponential maps. We do not use trigonometric distance inequalities \cite{conicsuvrit, bonnabel} for the convex case either, making the overall structure different from the proof of Riemannian SVRG as well. The details are deferred to the supplementary due to space limitations. However we do use the trigonometric inequality along with assumed lower bounds on sectional curvature for showing convergence for nonconvex functions, as described next.

\subsubsection{Convergence for the nonconvex case}
Here we provide a convergence result for nonconvex functions satisfying the following condition: $f(x^{t})-f(x^{*})\leq \kappa^{-1}\|\nabla f(x^{t})\|^{2}$, which automatically holds for strongly convex functions. We assume this to hold even if $f$ is nonconvex, since it allows us to show convergence of the iterates using $\|\nabla f(x^{t})\|^{2}$. Further, similar to \cite{zhangcolt16, zhangnips16} we assume that the sectional curvature of the manifold is lower bounded by $c_{\delta}$. This allows us to derive a trigonometric inequality analogous to the Euclidean case, where the sides of the ``triangle'' are geodesics \cite{bonnabel}. The details are given in the supplementary. Additionally, we assume that the eigenvalues of the inverse Hessian are bounded by $(\gamma,\Gamma)$ within some suitable region around an optimum. The main result of this section may be stated as follows:
\begin{proposition}
Let the sectional curvature of the manifold be bounded below by $c_{\delta}$, and the $f_{i}$ be $L$-smooth. Let $x^{*}$ be an optimum of $f(\cdot)$ in \eqref{eq:fsum}. Assume the eigenvalues of the inverse Hessian estimates are bounded. Set $\eta_{2}=\mu_{0}/\left(\Gamma Ln^{\alpha_{1}}\eta^{\alpha_{2}}\right)$, $K=mT$, and $m=\lfloor n^{\nicefrac{3\alpha_{1}}{2}}/\left(3\mu_{0}\zeta^{1-2\alpha_{2}}\right)\rfloor$, where $\alpha_{1}\in(0,1]$ and $\alpha_{2}\in[0,2]$. Then, for suitable choices of the inverse Hessian bounds $\gamma,\Gamma$, we can find values for the constants $\mu_{0}>0$ and $\epsilon>0$ so that the following holds:
\begin{align*}
\mathbb{E}\|\nabla f(x^{T})\|^{2}\leq(K\epsilon)^{-1}L\eta_{2}^{\alpha_{1}}\zeta^{\alpha_{2}}\left(f(x^{0})-f(x^{*})\right).
\end{align*}
\end{proposition} $\zeta$ is defined as $\zeta=\left(\tanh\left(d\sqrt{|c_{\delta}|}\right)\right)^{-1}d\sqrt{|c_{\delta}|}$ if $c_{\delta}<0$, and $0$ otherwise; $d$ is an upper bound on the diameter of the set $U$ mentioned earlier, containing an optimum $x^{*}$.
The proof is inspired by similar results from both Euclidean \cite{reddinonconvex} and Riemannian \cite{zhangnips16} analyses, and is given in the supplementary. One way to deal with negative curvature in Hessians in Euclidean space is by adding some suitable positive $\alpha$ to the diagonal, ensuring bounds on the eigenvalues. Investigation of such ``damping'' methods in the Riemannian context could be an interesting area of future work.

\section{Experiments}
\subsection{Karcher mean computation for PD matrices}
We begin with a synthetic experiment on learning the Karcher mean (centroid) \cite{bhatiabook} of positive definite matrices. For a collection of matrices $\left\lbrace\mathbf{W}_{i}\right\rbrace_{i=1}^{N}$, the optimization problem can be stated as follows:
\begin{align*}
\argmin_{\mathbf{W}\succeq 0}\left\lbrace \sum\limits_{i=1}^{N}\|\log\left(\mathbf{W}^{-\nicefrac{1}{2}}X_{i}\mathbf{W}^{-\nicefrac{1}{2}}\right)\|_{F}^{2}\right\rbrace.
\end{align*}
We compare our minibatched implementation of the Riemannian SVRG algorithm from \cite{zhangnips16}, denoted as rSVRG, with the stochastic variance-reduced L-BFGS procedure from Algorithm \ref{alg:srlbfgs}, denoted rSV-LBFGS. We implemented both algorithms using the Manopt \cite{manopt} and Mixest \cite{mixest} toolkits. We generated three sets of random positive definite matrices, each of size $100\times 100$, with condition numbers $10, 1e2,$ and $1e3$, and computed the ground truths using code from \cite{binietal}. Matrix counts were $100$ for condition number $1e-2$, and $1000$ for the rest. Both algorithms used equal batchsizes of $50$ for the first and third datasets, and $5$ for the second, and were initialized identically. Both used learning rates satisfying their convergence conditions. In general we found rSV-LBFGS to perform better with frequent correction pair calculations and a low retention rate, ostensibly due to the strong convexity; therefore we used $R=1$, $M=2$ for all three datasets. As mentioned earlier, the $z_{r}$ correction pair was calculated using \textbf{Option 1}: $z_{r}=\Gamma_{\gamma}\left(\eta_{2}\rho_{t}\right)$ where $\rho_{t}$ is calculated using the two-loop recursion. We used standard retractions to approximate the exponential maps. The retraction formulae for both symmetric PD and sphere manifolds used in the next section are given in the supplementary.
\begin{figure*}[!tb]
\centering
\begin{subfigure}[b]{0.32\textwidth}
\includegraphics[width=\textwidth]{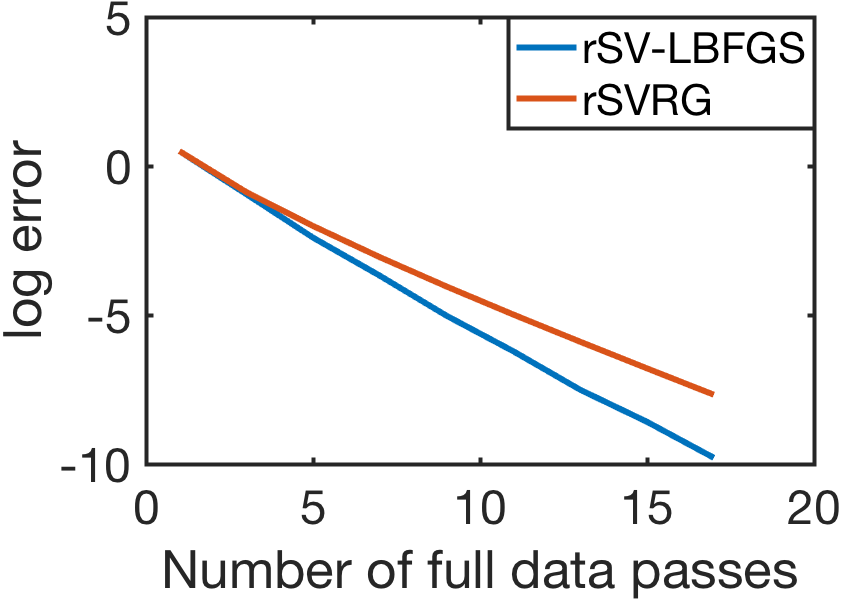}
\caption{ }
\label{figr:pdkm_1}
\end{subfigure}
\begin{subfigure}[b]{0.32\textwidth}
\includegraphics[width=\textwidth]{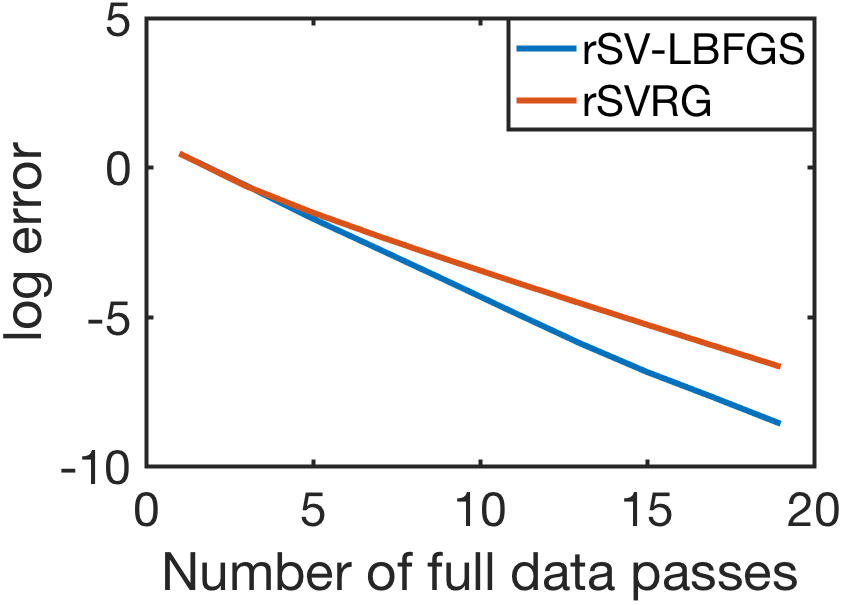}
\caption{ }
\label{figr:pdkm_2}
\end{subfigure}
\begin{subfigure}[b]{0.32\textwidth}
\includegraphics[width=\textwidth]{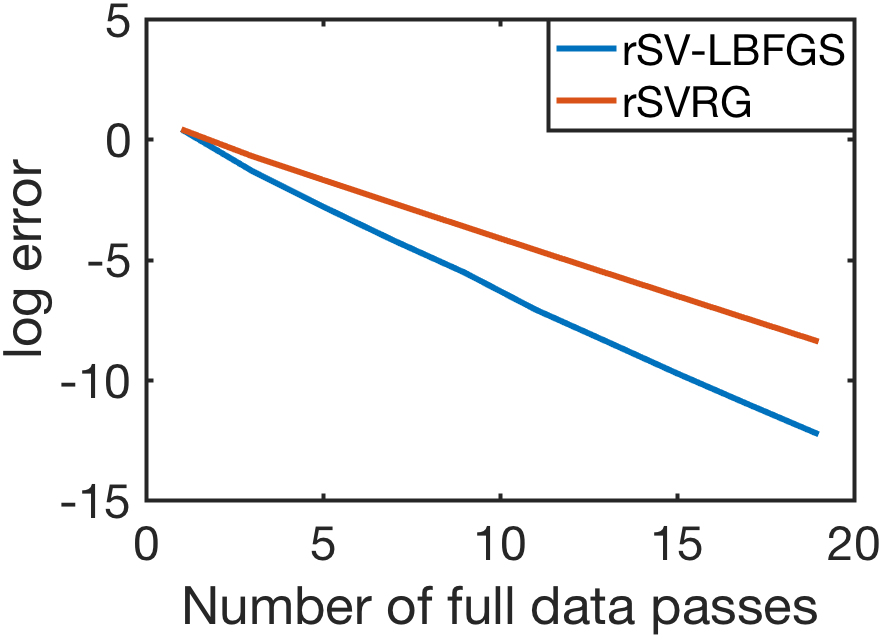}
\caption{ }
\label{figr:pdkm_3}
\end{subfigure}
\caption{Error decay plots for rSV-LBFGS and rSVRG obtained from the three synthetic Karcher mean computation experiments. Figures (a), (b) and (c) show the log-errors for datasets with condition numbers $1e3,1e2$ and $10$ respectively, plotted against number of passes over full dataset for each algorithm. See text for full details.}
\label{figpdkm}
\end{figure*}
We calculated the error of iterate $\mathbf{W}$ as $\|\mathbf{W}-\mathbf{W}^{*}\|_{F}^{2}$, with $\mathbf{W}^{*}$ being the ground truth. The log errors are plotted vs the number of data passes in Fig.\ref{figpdkm}. Comparing convergence speed in terms of \# data passes is often the preferred approach for benchmarking ML algorithms since it is an implementation-agnostic evaluation and focuses on the key bottleneck (I/O) for big data problems. Comparisons of rSVRG with Riemannian gradient descent methods, both batch and stochastic, can be found in \cite{zhangnips16}. From Fig.\ref{figpdkm}, we find rSV-LBFGS to converge faster than rSVRG for all three datasets.

\begin{figure*}[!tb]
\centering
\begin{subfigure}[b]{0.49\textwidth}
\includegraphics[width=\textwidth]{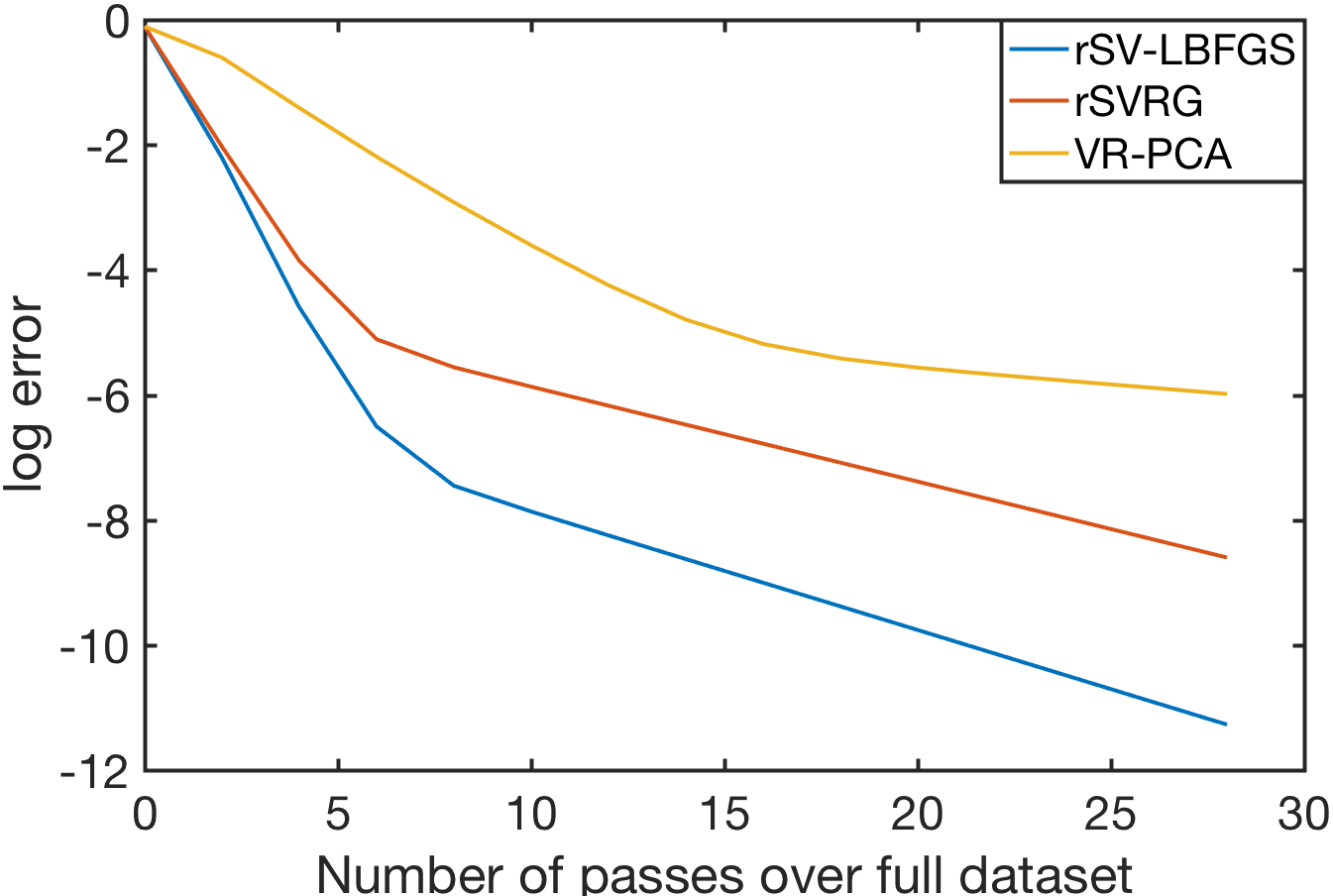}
\caption{ }
\label{fig:eigvdata1}
\end{subfigure}
\begin{subfigure}[b]{0.49\textwidth}
\includegraphics[width=\textwidth]{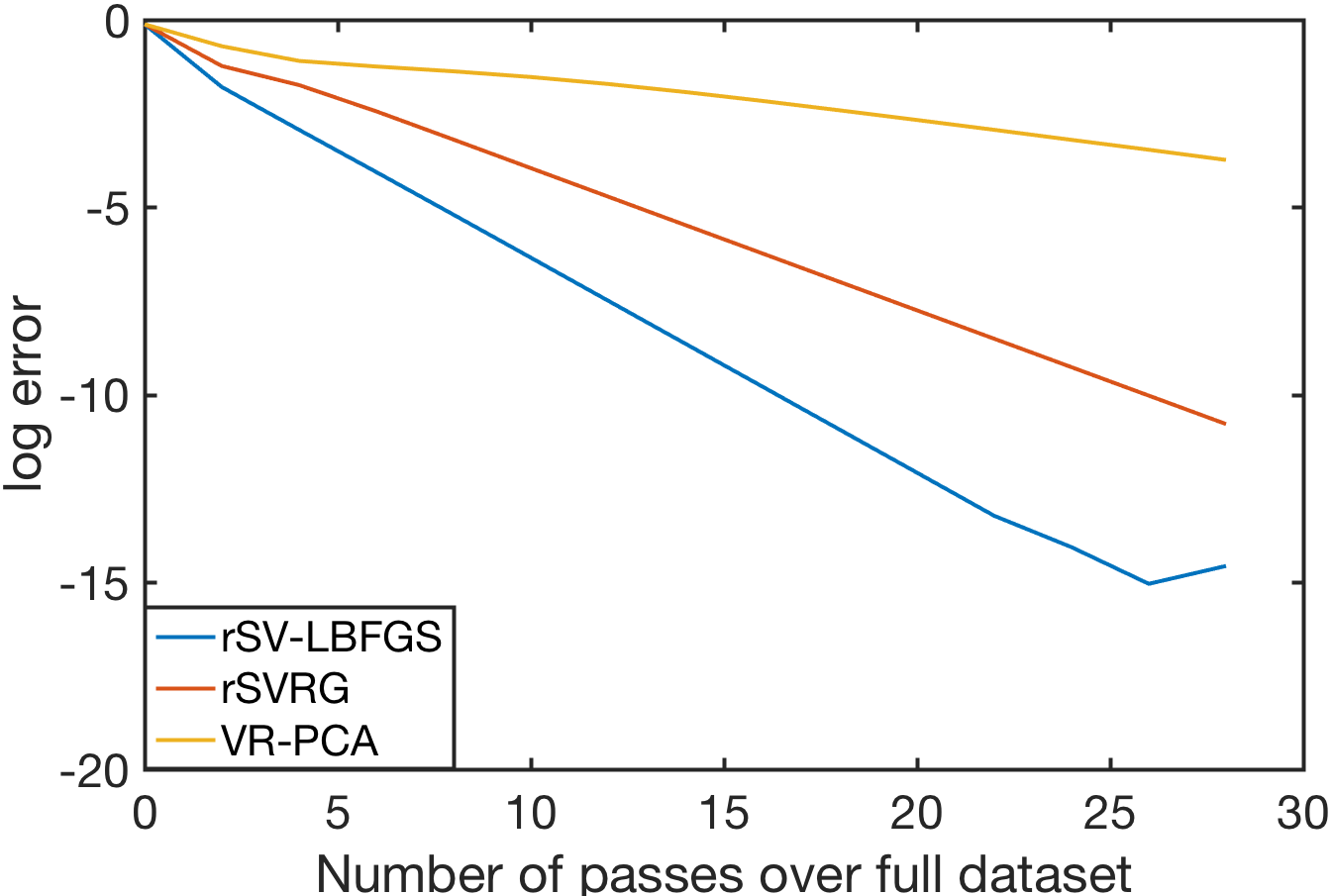}
\caption{ }
\label{fig:eigvdata2}
\end{subfigure}
\begin{subfigure}[b]{0.49\textwidth}
\includegraphics[width=\textwidth]{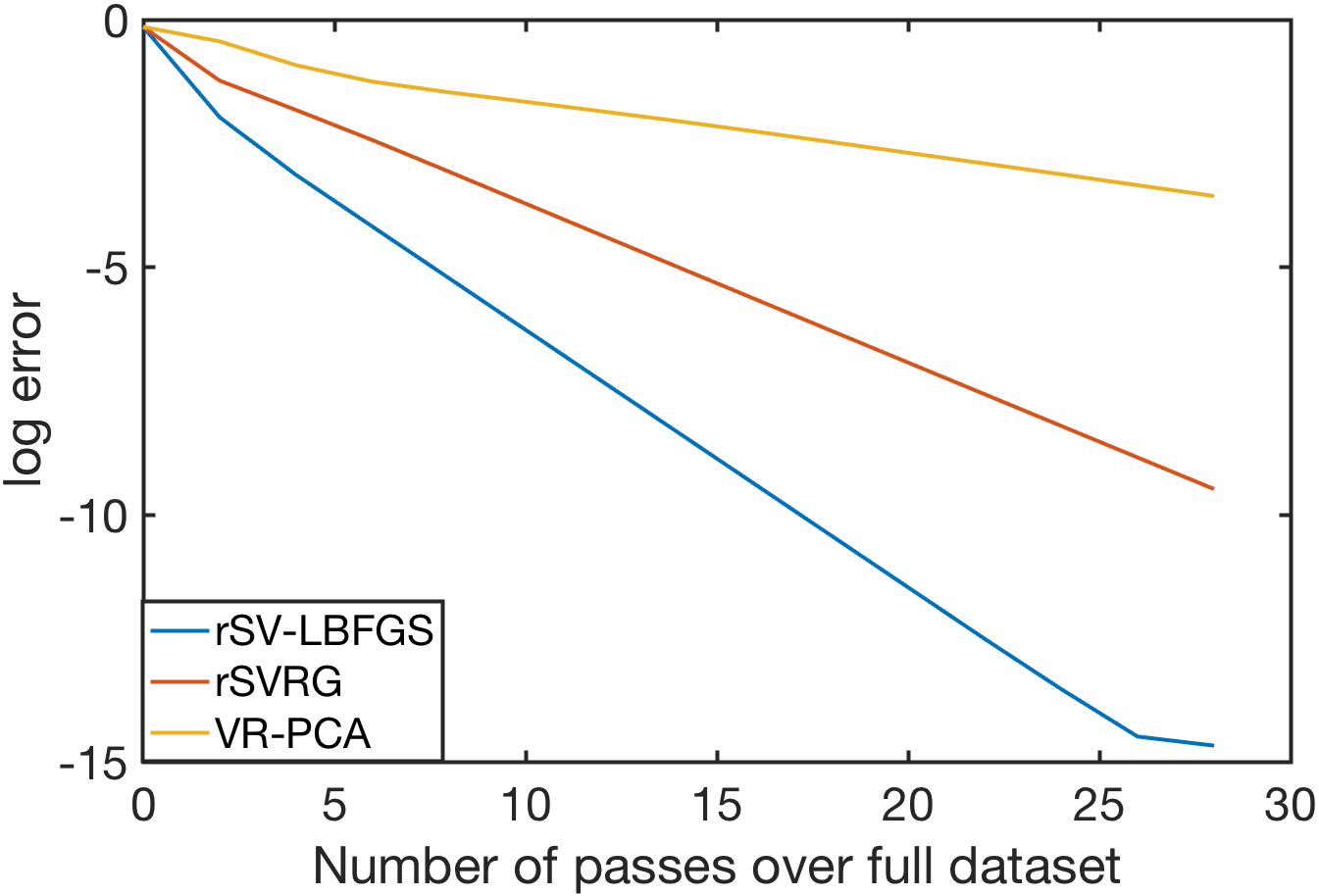}
\caption{ }
\label{fig:eigvdata3}
\end{subfigure}
\begin{subfigure}[b]{0.49\textwidth}
\includegraphics[width=\textwidth]{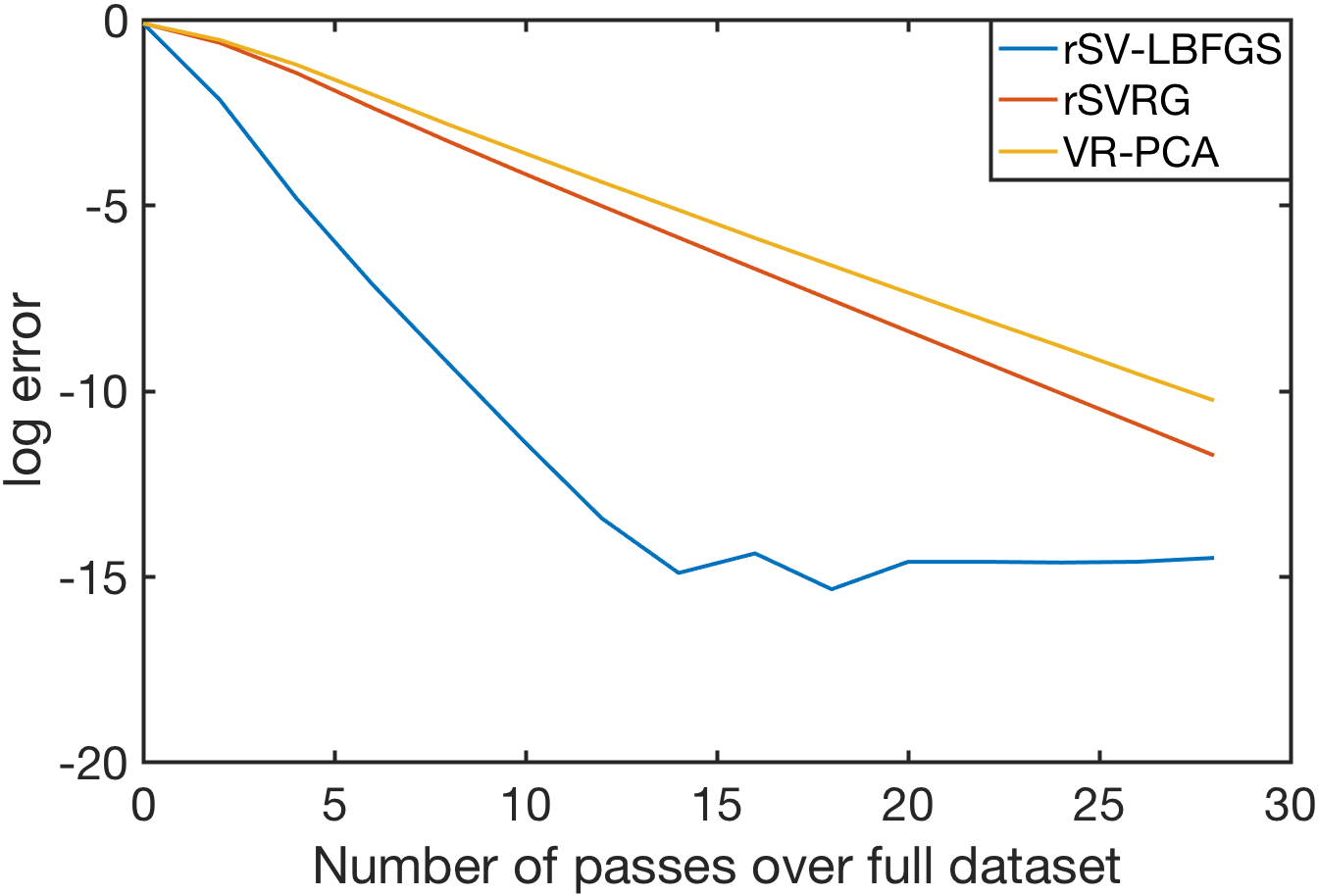}
\caption{ }
\label{fig:eigvdata4}
\end{subfigure}
\caption{Error decay plots for rSV-LBFGS, rSVRG and VR-PCA obtained for dominating eigenvalue computation on four synthetic data matrices. Eigengaps were $0.005$, $0.05$, $0.01$ and $0.1$ for (a), (b), (c) and (d) respectively. See text for experimental details.}
\label{fig:eigvsynth}
\end{figure*}

\subsection{Leading eigenvalue computation}

Next we conduct a synthetic experiment on calculating the leading eigenvalue of matrices. This of course is a common problem in machine learning, and as such is a unit-norm constrained nonconvex optimization problem in Euclidean space. It can be written as:
\begin{align*}
\min_{\mathbf{z}\in\mathbb{R}^{d}:\mathbf{z}^{T}\mathbf{z}=1} -\mathbf{z}^{T}\left(\frac{1}{N}\sum\limits_{i=1}^{N}d_{i}d_{i}^{T}\right)\mathbf{z},
\end{align*} 
where $D^{d\times N}$ is the data matrix, and $d_{i}$ are its columns. We can transform this into an unconstrained manifold optimization problem on the sphere defined by the norm constraint. To that end, we generated four sets of datapoints, for eigengaps $0.005$, $0.05$, $0.01$ and $0.1$, using the techniques described in \cite{shamiricml15}. Each dataset contains $100,000$ vectors of dimension $1000$. We used a minibatch size of $100$ for the two Riemannian algorithms. As before, learning rates for rSVRG were chosen according to the bounds in \cite{zhangnips16}. Selecting appropriate values for the four parameters in rSV-LBFGS (the first and second-order learning rates, $L$ and $M$) was a nontrivial task; after careful grid searches within the bounds defined by the convergence conditions, we chose $\eta_{1}=0.001, \eta_{2}=0.1,$ and $M=10$ for all four datasets. $L$ was set to $5$ for the dataset with eigengap $0.005$, and $10$ for the rest. The $z_{r}$ correction pair was calculated using \textbf{Option 2}: $z_{r}=\Gamma_{\gamma}\left(-\eta_{1}\nu_{\text{prev}}\right)$. We plot the performance of rSV-LBFGS, rSVRG and VR-PCA in Fig.\ref{fig:eigvsynth}. Extensive comparisons of VR-PCA with other Euclidean algorithms have been conducted in \cite{shamiricml15}; we do not repeat them here. We computed the error of iterate $\mathbf{z}$ as $1-\left(Ne^{*}\right)^{-1}\|D^{T}\mathbf{z}\|_{2}^{2}$, $e^{*}$ being the ground truth obtained from Matlab's $eigs$.

We see that the rSV-LBFGS method performs well on all four datasets, reaching errors of the order of $1e-15$ well before VR-PCA and rSVRG in the last three cases. The performance delta relative to VR-PCA is particularly noticeable in each of the four cases; we consider this to be a noteworthy result for fixed-stepsize algorithms on Riemannian manifolds.

\section{Conclusion} 
We propose a novel L-BFGS algorithm on Riemannian manifolds with variance reduced stochastic gradients, and provide theoretical analyses for strongly convex functions on manifolds. We conduct experiments on computing Riemannian centroids for symmetric positive definite matrices, and calculation of leading eigenvalues, both using large scale datasets. Our algorithm outperforms other Riemannian optimization algorithms with fixed stepsizes in both cases, and performs noticeably better than one of the fastest stochastic algorithms in Euclidean space, VR-PCA, for the latter case.

\bibliographystyle{unsrt}
\bibliography{refpaper}

\newpage
\setcounter{proposition}{0}
\setcounter{lemma}{0}
\section{Appendices} 

We present the convergence results from Propositions 1 and 2 in the main text in this section. 

\subsection{Analysis of convergence} We analyze convergence for finite-sum empirical risk minimization problems of the following form:
\begin{equation}
\label{eq:fsum}
\min_{x\in \mathcal{M}} f(x) = \frac{1}{N}\sum\limits_{i=1}^{N}f_{i}(x),
\end{equation} where the Riemannian manifold is denoted by $\mathcal{M}$. 
Note that the iterates are updated in Algorithm \ref{alg:srlbfgs} by taking the exponential map of the descent step multiplied by the stepsize, with the descent step computed as the product of the inverse Hessian estimate and the stochastic variance-reduced gradient using the standard two-loop recursion formula. Thus, to bound the optimization error using the iterates, we will need bounds on both the stochastic gradients and the inverse Hessians. As mentioned in \cite{zhangnips16}, the methods used to derive the former bounds for Euclidean algorithms cannot be ported directly to manifolds due to metric nonlinearities; see the proof of Proposition \ref{prop:one} for details. For the latter, we follow the standard template for L-BFGS algorithms in the literature \cite{ring_wirth,nocedal_wright,byrdarxiv14}. To begin, we make the following assumptions: 

\textbf{ Assumption 1.} The function $f$ in \eqref{eq:fsum} is strongly convex on the manifold, whereas the $f_{i}$s are individually convex.

\textbf{ Assumption 2.} There exist $\lambda,\Lambda\in (0,\infty),\>\lambda<\Lambda$ s.t. $\lambda\|\nu\|_{x}^{2}\leq D^{2}f\leq \Lambda\|\nu\|_{x}^{2}\quad\forall\nu\in T_{x}\mathcal{M}$.

These two assumptions allow us to (a) guarantee that $f$ has a unique minimizer $x^{*}$ in the convex sublevel set $U$, and (b) derive bounds on the inverse Hessian updates using BFGS update formulae for the Hessian approximations. Similar to the Euclidean case, these can be written as follows:
\begin{equation}
\label{eqn:bfgsone}
\hat{B}_{r}=\Gamma_{\gamma}\left[\hat{B}_{r-1}-\frac{B_{r-1}(s_{r-1},\cdot)\hat{B}_{r-1}s_{r-1}}{B_{r-1}(s_{r-1},s_{r-1})}\right]\Gamma_{\gamma}^{-1},
\end{equation} and by the Sherman-Morrison-Woodbury lemma, that of the inverse:
\begin{equation*}
H_{r} = \Gamma_{\gamma}\left[G^{-1}H_{r-1}G+\frac{g_{x_{r-1}}(s_{r-1},\cdot)s_{r-1}}{y_{r-1}s_{r-1}}\right]\Gamma_{\gamma}^{-1},
\end{equation*} where $G = I-\frac{g_{x_{r-1}}(s_{r-1},\cdot)\hat{y}_{r-1}}{y_{r-1}s_{r-1}}$. The $\hat{B}_{r}$ is the Lax-Milgram representation of the Hessian \cite{ring_wirth}.

\subsubsection{Trace and determinant bounds}
To start off our convergence discussions for both convex and nonconvex cases, we derive bounds for the trace and determinants of the Hessian approximations, followed by those for their inverses. The techniques used to do so are straightforward ports of the Euclidean originals \cite{nocedal_wright}, with some minor modifications to account for differential geometric technicalities. Using the assumptions above, we can prove the following bounds \cite{ring_wirth}:
\begin{lemma}
\label{lem:trdet}
Let $B_{r}^{s+1}=\left(H_{r}^{s+1}\right)^{-1}$ be the approximation of the Hessian generated by Algorithm~\ref{alg:srlbfgs}, and $\hat{B}_{r}^{s+1}$ and $\hat{H}_{r}^{s+1}$ be the corresponding Lax-Milgram representations. Let $M$, the memory parameter, be the number of correction pairs used to update the inverse Hessian approximation. Then, under Assumptions 1 and 2, we have:
\begin{align*}
\mathrm{tr}(\hat{B}_{r}^{s+1})\leq \mathrm{tr}(\hat{B}_{0}^{s+1})+M\Lambda,\quad \det(\hat{B}_{r}^{s+1})&\geq \det(\hat{B}_{0}^{s+1})\frac{\lambda^{M}}{(\mathrm{tr}(\hat{B}_{0}^{s+1})+\Lambda M)^{M}}.
\end{align*} Also, $\gamma I\preceq \hat{H}_{r}^{s+1}\preceq \Gamma I $, for some $\Gamma\geq\gamma>0$.
\end{lemma}

\begin{proof}
For brevity of notation we temporarily drop the $(s+1)$ superscript. The proof for the Euclidean case \cite{byrdarxiv14,mokhtarijmlr} can be generalized to the Riemannian scenario in a straightforward way, as follows. Define the average Hessian $G_{r}$ by 
\begin{equation*}
G_{r}(\cdot,\cdot)=\int\limits_{0}^{1}D^{2}[f(tz_{r})](\cdot,\cdot)dt,
\end{equation*} such that $y_{r}=G_{r}(z_{r},\cdot)$. Then, it can be easily shown that $G_{r}$ satisfies the bounds in Assumption 2. Furthermore, we have the following useful inequalities
\begin{equation}
\label{eqn:useful}
\frac{y_{r}z_{r}}{\Sqnorm{z_{r}}}=\frac{G_{r}(s_{r},s_{r})}{\Sqnorm{z_{r}}}\geq\lambda,\qquad\frac{\Sqnorm{y_{r}}}{y_{r}z_{r}}\leq\Lambda.
\end{equation}
Let $\hat{y}_{r}$ be the Riesz representation of $y_{r}$. Recall that parallel transport is an isometry along the unique geodesics, which implies invariance of the trace operator.  Then using the L-BFGS update \eqref{eqn:bfgsone} and \eqref{eqn:useful}, we can bound the trace of the Lax-Milgram representation of the Hessian approximations as follows:
\begin{align*}
\mathrm{tr}(\hat{B}_{r})&=\mathrm{tr}(\Gamma_{\gamma}\hat{B}_{r-1}\Gamma_{\gamma}^{-1})-\frac{\|\Gamma_{\gamma}\hat{B_{r-1}}s_{r-1}\|^{2}}{B_{r-1}(s_{r-1},s_{r-1})}+\frac{\|\Gamma_{\gamma}\hat{y}_{r-1}\|^{2}}{y_{r-1}s_{r-1}} \\
&\leq\>\mathrm{tr}(\Gamma_{\gamma}\hat{B}_{r-1}\Gamma_{\gamma}^{-1})+\frac{\|\Gamma_{\gamma}\hat{y}_{r-1}\|^{2}}{y_{r-1}s_{r-1}} \\
&\leq\>\mathrm{tr}(B_{0})+M\Lambda.
\end{align*} This therefore proves boundedness of the largest eigenvalue of the $\hat{B}_{r}$ estimates.

Similarly, to get a lower bound for the minimum eigenvalue, we bound the determinant as follows:
\begin{align*}
\det(\hat{B}_{r})=\>&\det(\Gamma_{\gamma}B_{r-1}\Gamma_{\gamma}^{-1})\cdot\det\left(I-\frac{\hat{B}_{r-1}s_{r-1}s_{r-1}}{B_{r-1}(s_{r-1},s_{r-1})} +\hat{B}_{r-1}^{-1}\frac{y_{r-1}y_{r-1}}{y_{r-1}s_{r-1}}\right) \\
=\>&\det(\Gamma_{\gamma}B_{r-1}\Gamma_{\gamma}^{-1})\frac{y_{r-1}s_{r-1}}{B_{r-1}(s_{r-1},s_{r-1})} \\
=\>&\det(\Gamma_{\gamma}B_{r-1}\Gamma_{\gamma}^{-1})\frac{y_{r-1}s_{r-1}}{\|s_{r-1}\|^{2}}\cdot\frac{\|s_{r-1}\|^{2}}{B_{r-1}(s_{r-1},s_{r-1})}\\
\geq\>&\det(\Gamma_{\gamma}B_{r-1}\Gamma_{\gamma}^{-1})\frac{\lambda}{\lambda_{max}(B_{r-1})},
\end{align*} where we use $\lambda_{\text{max}}$ to denote the maximum eigenvalue of $B_{r-1}$, and use \eqref{eqn:useful}. Since $\lambda_{\text{max}}$ is bounded above by the trace of $(\hat{B}_{r})$, we can telescope the inequality above to get
\begin{equation*}
\det(\hat{B}_{r})\geq\>\det(B_{0})\frac{\lambda^{M}}{(\mathrm{tr}(B_{0})+M\Lambda)^{M}}.
\end{equation*} The bounds on the maximum and minimum eigenvalues of $B_{r}$ thus derived allows us to infer corresponding bounds for those of $H_{r}$ as well, since by definition $H_{r}=\hat{B}_{r}^{-1}$.
\end{proof}

\subsubsection{Convergence results for the strongly convex case} Next we provide a brief overview of the bounds necessary to prove our convergence result. First, note the following bound implied by the Lipschitz continuity of the gradients:
\begin{align*}
f(x_{t+1}^{s+1}) \leq f(x_{t}^{s+1}) +g(\nabla f(x_{t}^{s+1}), \Expmapinv{x_{t}^{s+1}}{x_{t+1}^{s+1}}) + \frac{L}{2}\Sqnorm{\Expmapinv{x_{t}^{s+1}}{x_{t+1}^{s+1}}}.
\end{align*} Note the update step fom line 11 of Algorithm~\ref{alg:srlbfgs}: $x_{t+1}^{s+1}=\Expmap{x_{t}^{s+1}}{-\eta H_{r}^{s+1}\nu_{t}^{s+1}}$. We can replace the inverse exponential map in the inner product above by the quantity in the parentheses. In order to replace $H_{r}^{s+1}$ by the eigen-bounds from Lemma~\ref{lem:trdet}, we invoke the following result (Lemma 5.8 from \cite{leebook}:
\begin{lemma}
For any $D\in\Tanspace$ and $c,t\in\mathbb{R}$, $\gamma_{cD}(t)=\gamma_{D}(ct)$,
\end{lemma}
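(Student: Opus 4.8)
The plan is to exploit the uniqueness of geodesics as solutions of a second-order ODE with prescribed initial position and velocity. I would define the reparametrized curve $\sigma(t)=\gamma_{D}(ct)$ and show that it is itself a geodesic satisfying the same initial conditions as $\gamma_{cD}$; uniqueness then forces the two curves to coincide, which is exactly the claim.

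First I would verify the initial conditions. Since $\gamma_{D}(0)=x$, we immediately get $\sigma(0)=\gamma_{D}(0)=x$. Differentiating with the chain rule gives $\dot{\sigma}(t)=c\,\dot{\gamma}_{D}(ct)$, so $\dot{\sigma}(0)=c\,\dot{\gamma}_{D}(0)=cD$. These are precisely the initial conditions defining $\gamma_{cD}$, namely starting point $x$ and initial velocity $cD$.

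Next I would check that $\sigma$ has vanishing acceleration. Working in local coordinates, $\gamma_{D}$ satisfies the geodesic equation $\ddot{\gamma}^{k}+\Gamma^{k}_{ij}\dot{\gamma}^{i}\dot{\gamma}^{j}=0$ (with $\Gamma^{k}_{ij}$ the Christoffel symbols of the Levi-Civita connection, unrelated to the transport operator). From $\sigma(t)=\gamma_{D}(ct)$ the chain rule gives $\dot{\sigma}^{k}(t)=c\,\dot{\gamma}_{D}^{k}(ct)$ and $\ddot{\sigma}^{k}(t)=c^{2}\,\ddot{\gamma}_{D}^{k}(ct)$, so substituting produces an overall factor $c^{2}$ multiplying the geodesic equation for $\gamma_{D}$ evaluated at $ct$, which vanishes. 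Equivalently, phrasing this through the covariant derivative along $\sigma$, the second-order reparametrization yields $A\dot{\sigma}=c^{2}\,\bigl(A\dot{\gamma}_{D}\bigr)\big|_{ct}=0$. Hence $A\dot{\sigma}\equiv 0$ and $\sigma$ is a geodesic.

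Finally I would invoke the existence-uniqueness theorem for the geodesic ODE: a geodesic is uniquely determined by its initial point and initial velocity. Since $\sigma$ and $\gamma_{cD}$ are both geodesics through $x$ with initial velocity $cD$, they must agree on their common domain, giving $\gamma_{cD}(t)=\gamma_{D}(ct)$. The only delicate point is the correct bookkeeping of the $c^{2}$ factor arising from the two time-derivatives in the covariant acceleration; getting this right is what makes the reparametrization preserve the geodesic equation rather than merely rescaling it. Beyond that the argument is entirely routine, resting on the standard smooth dependence and uniqueness theory for the geodesic flow.
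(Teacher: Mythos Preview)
Your argument is correct and is exactly the standard rescaling proof: reparametrize, pick up the $c^{2}$ factor in the covariant acceleration, match initial data, and invoke uniqueness of geodesics. The paper does not supply its own proof of this lemma at all; it simply quotes the statement as Lemma~5.8 from Lee's textbook and uses it to rewrite $\Expmap{x}{c\nu}=\gamma_{\nu}(c)$. So there is nothing to compare against beyond noting that your write-up is essentially the textbook proof the paper is implicitly deferring to.
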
 where $\nu$ is the ``speed'' of the geodesic. This allows us to write $\Expmap{x}{c\nu}=\gamma_{\nu}(c)=\gamma_{c\nu}(1)$. Recall that for Riemann geodesics we have $\|\dot{\gamma(t)}\|=\bar{s}\text{ for all }t\in[0,1]$, a constant.

\begin{proposition}
\label{prop:one}
Let the Assumptions 1 and 2 hold. Further, let the $f(\cdot)$ in \eqref{eq:fsum} be S-strongly convex, and each of the $f_{i}$ be L-smooth, as defined earlier. Define the following constants: $p=\left[LS^{-1}+2\eta_{2}S^{-1}\left\lbrace2\eta L^{3}\Gamma^{2}-S\kappa\gamma\right\rbrace\right]$, and $q^{\prime}=6\eta^{2}L^{3}\Gamma^{2}S^{-1}$. Denote the global optimum by $x^{*}$. Then the iterate $x^{T+1}$ obtained after $T$ outer loop iterations will satisfy the following condition:
\begin{align*}
\mathbb{E}\left[f(x^{T+1})-f(x^{*})\right]\leq LS^{-1}\beta^{T}\mathbb{E}\left[f(x^{0})-f(x^{*})\right],
\end{align*} where the constants are chosen to satisfy $\beta=\left(1-p\right)^{-1}\left(q^{\prime}+p^{T}(1-p-q^{\prime})\right)<1$ for linear convergence.
\end{proposition}

\begin{proof}
From the $L$-smoothness condition \eqref{eqn:lsmooth}, we have the following:
\begin{align*}
f(x_{i+1}^{t+1})&\leq f(x_{i}^{t+1})+g\left(\nabla f(x_{i}^{t+1})\cdot\Expmapinv{x_{i}^{t+1}}{x_{i+1}^{t+1}}\right)\frac{L}{2}\|\Expmapinv{x_{i}^{t+1}}{x_{i+1}^{t+1}}\|^{2} \\
&=f(x_{i}^{t+1})-\eta_{2}\cdot g\left(\nabla f(x_{i}^{t+1}), H_{r}^{t+1}\nu_{i}^{t+1}\right)+\frac{L\eta_{2}^{2}}{2}\|H_{r}^{t+1}\nu_{i}^{t+1}\|^{2},
\end{align*} where we have omitted subscripts from the metric. Taking expectations, and using the bounds on the inverse Hessian estimates derived in Lemma \ref{lem:trdet}, we have the following:
\begin{align}
\label{eqn:temp1}
\mathbb{E}f(x_{i+1}^{t+1})\leq \mathbb{E}f(x_{i}^{t+1})-\eta_{2}\gamma\|\nabla f(x_{i}^{t+1})\|^{2}+\eta_{2}^{2}L^{3}\Gamma^{2}\left[2\|\Expmapinv{x_{i}^{t+1}}{x^{*}}\|^{2}+3\|\Expmapinv{x^{t}}{x^{*}}\|^{2}\right],
\end{align} where we have used the following bound on the stochastic variance-reduced gradients derived in \cite{zhangnips16}:
\begin{align*}
\mathbb{E}\|\nu_{i}^{t+1}\|^{2}\leq 4L^{2}\|\Expmapinv{x_{i}^{t+1}}{x^{*}}\|^{2}+6L^{2}\|\Expmapinv{x^{t}}{x^{*}}\|^{2}.
\end{align*} This can be derived using triangle inequalities and the $L$-smoothness assumption. Note that the bound is different from the Euclidean case \cite{rienips13}, due to technicalities introduced by the Riemannian metric not being linear in general.

Now, recall the condition $f(x^{t})-f(x^{*})\leq 2\kappa)^{-1}\|\nabla f(x^{t})\|^{2}$, which follows from strong convexity. Using this, we can derive a bound on the $\nabla$ term in \eqref{eqn:temp1} as follows:
\begin{align*}
\|\nabla f(x_{i}^{t+1})\|^{2}\geq 2\kappa\left(f(x_{i}^{t+1})-f(x^{*})\right)\geq S\kappa\|\Expmapinv{x_{i}^{t+1}}{x^{*}}\|^{2},
\end{align*} where the second inequality follows from $S$-strong convexity \eqref{eqn:sconvex}, since $\nabla f(x^{*})=0$. Plugging this into \eqref{eqn:temp1}, we have the following:
\begin{align}
\label{eqn:temp2}
\begin{split}
\mathbb{E}f(x_{i+1}^{t+1})\leq f(x_{i}^{t+1})&+\eta_{2}\left[2\eta L^{3}\Gamma^{2}-S\kappa\gamma\right]\|\Expmapinv{x_{i}^{t+1}}{x^{*}}\|^{2} \\
&+3\eta_{2}^{2}L^{3}\Gamma^{2}\|\Expmapinv{x^{t}}{x^{*}}\|^{2}.
\end{split}
\end{align}
Now, note that $S$-strong convexity allows us to write the following:
\begin{align*}
\frac{S}{2}\|\Expmapinv{x_{i+1}^{t+1}}{x^{*}}\|^{2}&\leq f(x_{i+1}^{t+1})-f(x^{*}) \\
&=\left[f(x_{i+1}^{t+1})-f(x_{i}^{t+1})\right] + \left[f(x_{i}^{t+1}) -f(x^{*})\right] \\
&\leq\left[f(x_{i+1}^{t+1})-f(x_{i}^{t+1})\right] + \frac{L}{2}\|\Expmapinv{x_{i}^{t+1}}{x^{*}}\|^{2},
\end{align*} where the last step follows from $L$-smoothness. Taking expectations of both sides, and using \eqref{eqn:temp2} for the first component on the right, we have
\begin{align}
\label{eqn:temp3}
\begin{split}
\mathbb{E}\|\Expmapinv{x_{i+1}^{t+1}}{x^{*}}\|^{2}\leq &\left[\frac{L}{S}+\frac{2\eta_{2}}{S}\left\lbrace2\eta L^{3}\Gamma^{2}-S\kappa\gamma\right\rbrace\right]\|\Expmapinv{x_{i+1}^{t+1}}{x^{*}}\|^{2} \\
&+ \frac{6\eta_{2}^{2}L^{3}\Gamma^{2}}{S}\|\Expmapinv{x^{t}}{x^{*}}\|^{2}.
\end{split}
\end{align} Now, we denote $p=\left[\frac{L}{S}+\frac{2\eta_{2}}{S}\left\lbrace2\eta L^{3}\Gamma^{2}-S\kappa\gamma\right\rbrace\right]$, and $q^{\prime}= \frac{6\eta_{2}^{2}L^{3}\Gamma^{2}}{S}$. Then, taking expectations over the sigma algebra of all the random variables till minibatch $m$, and some algebra, it can be shown that:
\begin{align*}
\mathbb{E}\|\Expmapinv{x_{m}^{t+1}}{x^{*}}\|^{2}-q\mathbb{E}\|\Expmapinv{x_{0}^{t+1}}{x^{*}}\|^{2}\leq p^{m}\left(1-q\right)\|\Expmapinv{x_{0}^{t+1}}{x^{*}}\|^{2},
\end{align*} where $q=(1-p)^{-1}q^{\prime}$. Note that this provides a bound on the iterate at the end of the inner minibatch loop. Telescoping further, we have the bound
\begin{align*}
\mathbb{E}\|\Expmapinv{x^{T+1}}{x^{*}}\|^{2}\leq\beta^{T}\mathbb{E}\|\Expmapinv{x^{0}}{x^{*}}\|^{2},
\end{align*} where $\beta=\frac{q^{\prime}+p^{T}(1-p-q^{\prime})}{1-p}$. Then, using this result with a final appeal to the $L$-Lipschitz and $S$-strong convexity conditions, we have the bounds
\begin{align*}
\mathbb{E}\left[f(x^{T+1})-f(x^{*})\right]&\leq\frac{L}{2}\mathbb{E}\|\Expmapinv{x^{T+1}}{x^{*}}\|^{2} \\
&\leq\frac{L}{S}\beta^{T}\left[f(x^{0})-f(x^{*})\right],
\end{align*} thereby completing the proof.
\end{proof}

\subsubsection{Convergence results for the nonconvex case}
We begin with the following inequality involving the side lengths of a geodesic ``triangle'' \cite{conicsuvrit,bonnabel}:
\begin{lemma}
\label{lem:trig}
Let the sectional curvature of a Riemannian manifold be bounded below by $c_{\delta}$. Let $A$ be the angle between sides of length $b$ and $c$ in a triangle on the manifold, with the third side of length $a$, as usual. Then the following holds:
\begin{align*}
a^{2}\leq \frac{c\sqrt{|c_{\delta}|}}{\tanh\left(c\sqrt{|c_{\delta}|}\right)}b^{2}+c^{2}-2bc\cos A.
\end{align*}
\end{lemma}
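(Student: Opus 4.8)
The plan is to reduce the statement to a model space of constant curvature via a comparison theorem, compute the exact law of cosines there, and then relax the resulting transcendental identity to the stated quadratic bound. Since the manifold has sectional curvature bounded below by $c_{\delta}$, I would invoke Toponogov's theorem in its hinge form \cite{zhangcolt16, bonnabel}: given the hinge formed by the two geodesic sides of length $b$ and $c$ meeting at angle $A$, the distance $a$ between their far endpoints is no larger than the corresponding distance $\bar{a}$ in the simply connected space form $M_{c_{\delta}}$ of constant curvature $c_{\delta}$ built from a hinge with the same data $(b,c,A)$. Thus $a\le\bar{a}$, and it suffices to prove the inequality with $\bar{a}$ in place of $a$, now entirely inside $M_{c_{\delta}}$.

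For $c_{\delta}<0$ the model space is hyperbolic of curvature $c_{\delta}$, and writing $\kappa=\sqrt{|c_{\delta}|}$ the hyperbolic law of cosines gives the exact relation
\begin{equation*}
\cosh(\kappa\bar{a})=\cosh(\kappa b)\cosh(\kappa c)-\sinh(\kappa b)\sinh(\kappa c)\cos A.
\end{equation*}
For $c_{\delta}\ge 0$ the comparison space is Euclidean or spherical, the law of cosines degenerates to $\bar{a}^{2}=b^{2}+c^{2}-2bc\cos A$ in the flat limit, and since $\tfrac{c\kappa}{\tanh(c\kappa)}\to 1$ the bound reduces to the familiar Euclidean one; I would treat this as a limiting case. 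It then remains to prove the purely scalar implication: whenever $\kappa\bar{a}$ satisfies the identity above, one has $\bar{a}^{2}\le\tfrac{\kappa c}{\tanh(\kappa c)}\,b^{2}+c^{2}-2bc\cos A$.

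I would establish this scalar inequality by freezing $c$ and $A$ and regarding $\bar{a}$ as a smooth function of $b$ through the law of cosines. At $b=0$ the identity forces $\bar{a}=c$, and implicit differentiation gives $\bar{a}'(0)=-\cos A$; hence the two sides of the target inequality, viewed as functions of $b$, agree in both value and first derivative at $b=0$. Differentiating the identity twice and using the identity itself to eliminate the product term yields the clean relation $\bar{a}''=\kappa\coth(\kappa\bar{a})\,(1-(\bar{a}')^{2})$, so that the second derivative of $\bar{a}^{2}$ is governed by the monotone increasing function $x\mapsto x\coth x$ (which satisfies $x\coth x\ge 1$). Comparing this with the constant second derivative $\tfrac{2\kappa c}{\tanh(\kappa c)}$ of the quadratic right-hand side, and integrating twice from $b=0$ using the matched initial data, gives the claim.

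The main obstacle is precisely this last relaxation. Because $\bar{a}$ may exceed $c$ for large $b$, the second derivative of $\bar{a}^{2}$ is not dominated pointwise by that of the quadratic bound, so a naive convexity comparison fails; one must instead argue globally, exploiting that the gap between the two sides vanishes in value and slope at $b=0$ and has the correct sign of curvature near the endpoints. The degenerate configurations provide useful sanity checks: at $A=\pi$ one has $\bar{a}=b+c$ and the gap is exactly $\bigl(\tfrac{\kappa c}{\tanh(\kappa c)}-1\bigr)b^{2}\ge 0$, while $A=\pi/2$ controls the perpendicular extreme. Getting the coefficient to emerge as exactly $\tfrac{c\kappa}{\tanh(c\kappa)}$ — rather than some looser constant — is the delicate part, and is where the specific form of the $\coth$ correction enters.
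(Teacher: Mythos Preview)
The paper does not actually prove this lemma: it is quoted verbatim from prior work with the citation \cite{conicsuvrit,bonnabel} and then used as a tool in the nonconvex analysis. There is therefore no ``paper's own proof'' to compare your attempt against. Your outline --- Toponogov hinge comparison to pass to the constant-curvature model, then the hyperbolic law of cosines, then a scalar relaxation --- is precisely the route taken in the sources the paper cites (see in particular \cite{zhangcolt16}, where this inequality is Lemma~6), so you are on the canonical track, not an alternative one.

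Where your write-up is weaker than the cited proofs is the final relaxation step, and you have correctly flagged this yourself. The second-derivative comparison you sketch does not close: as you note, $\bar a$ can exceed $c$, so $(\bar a^{2})''$ is not globally dominated by the constant $2\kappa c/\tanh(\kappa c)$, and ``integrating twice from $b=0$'' is not justified by the argument as written. The standard way to finish is not a convexity comparison on $b\mapsto\bar a(b)^{2}$ but a direct chain of elementary hyperbolic inequalities applied to the law of cosines --- specifically, combining $t^{2}\le 2(\cosh t-1)$ (to pass from $\cosh(\kappa\bar a)$ to $\bar a^{2}$) with sharp bounds of the form $\cosh x-1\le\tfrac{x^{2}}{2}\cosh x$ and $\sinh x\ge x$ on the right-hand side, after which the coefficient $\kappa c/\tanh(\kappa c)=\kappa c\,\cosh(\kappa c)/\sinh(\kappa c)$ drops out exactly. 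If you replace your differential argument with that algebraic one, the proof is complete and matches the literature the paper relies on.
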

The cosine is defined using inner products, as in the Euclidean case, and the distances using inverse exponential maps, as seen above. The following sequence of results and proofs are inspired by the basic structure of \cite{reddinonconvex}, with suitable modifications involving the inverse Hessian estimates from the L-BFGS updates.

\begin{lemma}
\label{lem:lyap}
Let the assumptions of Proposition 2 hold. Define the following functions:
\begin{align*}
c_{i}&=c_{i+1}\left(1+\beta\eta_{2}\Gamma+2\zeta L^{2}\eta_{2}^{2}\Gamma^{2}\right)+L^{3}\eta_{2}^{2}\Gamma^{2}, \\
\delta_{i}&=\eta_{2}\gamma - \frac{c_{i+1}\eta_{2}\Gamma}{\beta}-L\eta_{2}^{2}\Gamma-2c_{i+1}\zeta\eta_{2}^{2}\Gamma^{2}>0,
\end{align*} where $c_{i},c_{i+1},\beta,\eta_{2}>0$. Further, for $0\leq t\leq T-1$, define the Lyapunov function $R_{i}^{t+1}=\mathbb{E}\left[f(x_{i}^{t+1})+c_{i}\|\Expmapinv{x^{t}}{x_{i}^{t+1}}\|^{2}\right]$. Then we have the following bound:
\begin{align*}
\mathbb{E}\|\nabla f(x_{i}^{t+1})\|^{2}\leq \frac{R_{i}^{t+1}-R_{i+1}^{t+1}}{\delta_{i}}.
\end{align*}
\end{lemma}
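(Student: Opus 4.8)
The plan is to establish a one-step Lyapunov descent inequality from which the stated bound follows by simple rearrangement. I would combine three ingredients: (i) the $L$-smoothness descent bound \eqref{eqn:lsmooth} applied to the L-BFGS update; (ii) a curvature-corrected recursion for the snapshot distance $\|\Expmapinv{x^{t}}{x_{i}^{t+1}}\|^{2}$ obtained from the trigonometric inequality of Lemma~\ref{lem:trig}; and (iii) the Riemannian second-moment bound on the variance-reduced gradient $\nu_{i}^{t+1}$ from \cite{zhangnips16}. The point is that $c_{i}$ and $\delta_{i}$ are \emph{precisely} the coefficients that emerge when these three bounds are assembled into $R_{i+1}^{t+1}$; no independent guesswork about their form is needed.

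First I would write the update in descent form $x_{i+1}^{t+1}=\Expmap{x_{i}^{t+1}}{-\eta_{2}H_{r}^{t+1}\nu_{i}^{t+1}}$, so that $\Expmapinv{x_{i}^{t+1}}{x_{i+1}^{t+1}}=-\eta_{2}H_{r}^{t+1}\nu_{i}^{t+1}$, and substitute into \eqref{eqn:lsmooth}. Taking the conditional expectation over the current minibatch, using unbiasedness $\mathbb{E}\nu_{i}^{t+1}=\nabla f(x_{i}^{t+1})$ together with the eigenvalue bounds $\gamma I\preceq \hat{H}_{r}^{t+1}\preceq\Gamma I$ of Lemma~\ref{lem:trdet} — applied as a lower bound $g(\nabla f,H_{r}\nabla f)\geq\gamma\|\nabla f\|^{2}$ on the first-order term and an upper bound $\|H_{r}\nu_{i}\|^{2}\leq\Gamma^{2}\|\nu_{i}\|^{2}$ on the quadratic term — yields an inequality of the form $\mathbb{E}f(x_{i+1}^{t+1})\leq f(x_{i}^{t+1})-\eta_{2}\gamma\|\nabla f(x_{i}^{t+1})\|^{2}+\tfrac{L\eta_{2}^{2}\Gamma^{2}}{2}\mathbb{E}\|\nu_{i}^{t+1}\|^{2}$. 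Here I would treat $H_{r}^{t+1}$ as measurable with respect to the history preceding the current minibatch, the standard conditioning for stochastic L-BFGS, which validates $\mathbb{E}[H_{r}^{t+1}\nu_{i}^{t+1}]=H_{r}^{t+1}\nabla f(x_{i}^{t+1})$.

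Next I would expand the snapshot distance via Lemma~\ref{lem:trig} on the geodesic triangle with apex $x^{t}$ and remaining vertices $x_{i}^{t+1},x_{i+1}^{t+1}$: taking $b$ as the step length $\eta_{2}\|H_{r}\nu_{i}\|$, $c$ as $\|\Expmapinv{x_{i}^{t+1}}{x^{t}}\|$, and $A$ the angle at $x_{i}^{t+1}$, the inequality gives $\|\Expmapinv{x^{t}}{x_{i+1}^{t+1}}\|^{2}\leq \zeta\eta_{2}^{2}\Gamma^{2}\|\nu_{i}\|^{2}+\|\Expmapinv{x^{t}}{x_{i}^{t+1}}\|^{2}+2\eta_{2}\,g(H_{r}\nu_{i},\Expmapinv{x_{i}^{t+1}}{x^{t}})$, with the leading factor absorbed into $\zeta$ through the diameter bound $d$ on $U$. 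After conditional expectation the cross term is $2\eta_{2}g(H_{r}\nabla f,\Expmapinv{x_{i}^{t+1}}{x^{t}})$, which I would control by Young's inequality with parameter $\beta$ and the bound $\Gamma$, producing a $\tfrac{\eta_{2}\Gamma}{\beta}\|\nabla f\|^{2}$ penalty and a $\beta\eta_{2}\Gamma\|\Expmapinv{x^{t}}{x_{i}^{t+1}}\|^{2}$ growth term. Substituting the variance bound $\mathbb{E}\|\nu_{i}^{t+1}\|^{2}\leq 2\|\nabla f(x_{i}^{t+1})\|^{2}+2L^{2}\|\Expmapinv{x^{t}}{x_{i}^{t+1}}\|^{2}$ into both the descent and distance inequalities, forming $R_{i+1}^{t+1}=\mathbb{E}[f(x_{i+1}^{t+1})+c_{i+1}\|\Expmapinv{x^{t}}{x_{i+1}^{t+1}}\|^{2}]$, and collecting terms, the coefficient multiplying $\|\Expmapinv{x^{t}}{x_{i}^{t+1}}\|^{2}$ is exactly $c_{i}$ and the (positive, by hypothesis) coefficient multiplying $-\|\nabla f(x_{i}^{t+1})\|^{2}$ is exactly $\delta_{i}$. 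This delivers $R_{i+1}^{t+1}\leq R_{i}^{t+1}-\delta_{i}\mathbb{E}\|\nabla f(x_{i}^{t+1})\|^{2}$, and rearranging proves the claim.

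The main obstacle is the curvature bookkeeping in the distance recursion: unlike the Euclidean identity $\|x_{i+1}-\tilde x\|^{2}=\|x_{i}-\tilde x\|^{2}-2\eta\langle v_{i},x_{i}-\tilde x\rangle+\eta^{2}\|v_{i}\|^{2}$, on the manifold this is only an inequality supplied by Lemma~\ref{lem:trig}, so I must check that the sign of the cross term is favorable and that the comparison factor can be uniformly bounded by $\zeta$ over $U$ using the diameter $d$. A secondary subtlety is the measurability/conditioning argument that justifies $\mathbb{E}[H_{r}^{t+1}\nu_{i}^{t+1}]=H_{r}^{t+1}\nabla f(x_{i}^{t+1})$ and the validity of the Lemma~\ref{lem:trdet} eigenvalue bounds on the traversed region; the remaining steps are routine collection of constants.
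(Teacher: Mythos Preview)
Your proposal is correct and follows essentially the same route as the paper's own proof: the $L$-smoothness descent bound with the $\gamma,\Gamma$ eigenvalue sandwich, the curvature-corrected distance recursion from Lemma~\ref{lem:trig} with the cross term handled by Young's inequality $g(a,b)\leq\tfrac{1}{2\beta}\|a\|^{2}+\tfrac{\beta}{2}\|b\|^{2}$, and the variance bound $\mathbb{E}\|\nu_{i}^{t+1}\|^{2}\leq 2\|\nabla f(x_{i}^{t+1})\|^{2}+2L^{2}\|\Expmapinv{x^{t}}{x_{i}^{t+1}}\|^{2}$, combined into $R_{i+1}^{t+1}$ to read off $c_{i}$ and $\delta_{i}$. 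Your explicit treatment of the conditioning that makes $\mathbb{E}[H_{r}^{t+1}\nu_{i}^{t+1}]=H_{r}^{t+1}\nabla f(x_{i}^{t+1})$ valid is a welcome clarification that the paper leaves implicit.
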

\begin{proof}
As with the proof of Proposition 1, we begin with the following bound derived from $L$-smoothness:
\begin{align*}
\mathbb{E}f(x_{i+1}^{t+1})\leq\mathbb{E}\left[f(x_{i}^{t+1})-\eta_{2}\gamma\|\nabla f(x_{i}^{t+1})\|^{2}+\frac{L\eta_{2}^{2}\Gamma^{2}}{2}\|\nu_{i}^{t+1}\|^{2}\right],
\end{align*} where we have bound the bounds on the inverse Hessian derived earlier. Then using Lemma \ref{lem:trig} above, we have:
\begin{align*}
\mathbb{E}\|\Expmapinv{x^{t}}{x_{i+1}^{t+1}}\|^{2}&\leq\mathbb{E}\|\Expmapinv{x^{t}}{x_{i}^{t+1}}\|^{2}+\zeta\|\Expmapinv{x_{i}^{t+1}}{x_{i+1}^{t+1}}\|^{2}-2g\left(\Expmapinv{x_{i}^{t+1}}{x_{i+1}^{t+1}},\Expmapinv{x_{i}^{t+1}}{x^{*}}\right) \\
&\leq\mathbb{E}\left[\|\Expmapinv{x^{t}}{x_{i}^{t+1}}\|^{2}+\zeta\eta_{2}^{2}\Gamma^{2}\|\nu_{i}^{t+1}\|^{2}\right]\\
&\quad+2\eta_{2}\Gamma\left[(2\beta)^{-1}\|\nabla f(x_{i}^{t+1})\|^{2}+\frac{\beta}{2}\|\Expmapinv{x^{t}}{x_{i}^{t+1}}\|^{2}\right],
\end{align*} where we have used $g(a,b)\leq\frac{1}{2\beta}\|a\|^{2}+\frac{\beta}{2}\|b\|^{2}$. Note that we have used the norm of the inverse exponential maps as the side lengths in Lemma \ref{lem:trig}. Using these last two results, we can derive the following bound for the Lyapunov functions $R_{i+1}^{t+1}$:
\begin{align*}
R_{i+1}^{t+1}&\leq\mathbb{E}\left[f(x_{i}^{t+1})-\left\lbrace\eta_{2}\gamma-\frac{c_{i+1}\eta_{2}\Gamma}{\beta}\right\rbrace\|\nabla f(x_{i}^{t+1})\|^{2}\right]+\Gamma^{2}\left\lbrace c_{i+1}\zeta\eta_{2}^{2}+\frac{L\eta_{2}^{2}}{2}\right\rbrace\mathbb{E}\|\nu_{i}^{t+1}\|^{2} \\
&\quad+c_{i+1}\left\lbrace 1+\eta_{2}\Gamma\beta\right\rbrace\mathbb{E}\|\Expmapinv{x^{t}}{x_{i}^{t+1}}\|^{2}.
\end{align*} The norm of the stochastic variance reduced gradient can be bounded as follows \cite{zhangnips16,reddinonconvex}:
\begin{align*}
\mathbb{E}\|\nu_{i}^{t+1}\|^{2}\leq 2L^{2}\mathbb{E}\|\Expmapinv{x^{t}}{x_{i}^{t+1}}\|^{2}+2\mathbb{E}\|\nabla f(x_{i}^{t+1})\|^{2}.
\end{align*} This allows us to bound the Lyapunov function above as:
\begin{align*}
R_{i+1}^{t+1}\leq R_{i}^{t+1}-\left\lbrace\eta_{2}\gamma-\frac{c_{i+1}\eta_{2}\Gamma}{\beta}-L\eta_{2}^{2}\Gamma^{2}-2c_{i+1}\zeta\eta_{2}^{2}\Gamma^{2}\right\rbrace\mathbb{E}\|\nabla f(x_{i}^{t+1})\|^{2},
\end{align*} which completes the proof.
\end{proof}

Next we present a bound on $\|\nabla f(\cdot)\|^{2}$ using the $\delta_{i}$'s defined above (Thm 6 of \cite{zhangnips16}):
\begin{lemma}
\label{lem:lastbutone}
Let the conditions of Lemma \ref{lem:lyap} hold, and define the quantities therein. Let $\delta_{i}>0$ $\forall i\in[0,m]$, and $c_{m}=0$. Let $\delta_{\delta}=\min_{i}\delta_{i}$, and $K=mT$. Then if we randomly return one of the iterates $\left\lbrace x_{i}^{t+1}\right\rbrace_{i=1}^{m}$ as $x^{t+1}$, then:
\begin{align*}
\mathbb{E}\|\nabla f(x^{t})\|^{2}\leq \frac{f(x^{0})-f(x^{*})}{K\delta_{\delta}}.
\end{align*}
\end{lemma}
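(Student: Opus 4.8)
The plan is to convert the one-step Lyapunov decrease of Lemma~\ref{lem:lyap} into a global guarantee by telescoping twice, once over the inner minibatch index and once over the outer index, and then to exploit the uniform random return of the iterate to turn the resulting averaged sum into an expectation. Nothing here requires new geometric machinery; the entire argument rests on the bound already proved in Lemma~\ref{lem:lyap} together with the two boundary hypotheses $c_{m}=0$ and $\delta_{i}>0$.

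First I would replace each $\delta_{i}$ in the denominator of Lemma~\ref{lem:lyap} by its minimum $\delta_{\delta}=\min_{i}\delta_{i}$, which is legitimate since every $\delta_{i}>0$; this yields the uniform inequality $\delta_{\delta}\,\mathbb{E}\|\nabla f(x_{i}^{t+1})\|^{2}\leq R_{i}^{t+1}-R_{i+1}^{t+1}$ for each $i$. Summing over $i=0,\ldots,m-1$ collapses the right-hand side telescopically to $R_{0}^{t+1}-R_{m}^{t+1}$, leaving $\delta_{\delta}\sum_{i=0}^{m-1}\mathbb{E}\|\nabla f(x_{i}^{t+1})\|^{2}\leq R_{0}^{t+1}-R_{m}^{t+1}$.

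Next I would evaluate the two surviving Lyapunov endpoints, which is where the boundary hypotheses do the real work. Since $c_{m}=0$, the quadratic term in $R_{m}^{t+1}$ vanishes and $R_{m}^{t+1}=\mathbb{E}f(x_{m}^{t+1})=\mathbb{E}f(x^{t+1})$; and since the inner loop is initialized at $x_{0}^{t+1}=x^{t}$, the vector $\Expmapinv{x^{t}}{x_{0}^{t+1}}$ is identically zero, so $R_{0}^{t+1}=\mathbb{E}f(x^{t})$. Hence within a single outer iteration $\delta_{\delta}\sum_{i=0}^{m-1}\mathbb{E}\|\nabla f(x_{i}^{t+1})\|^{2}\leq\mathbb{E}f(x^{t})-\mathbb{E}f(x^{t+1})$.

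Finally I would sum over the outer index $t=0,\ldots,T-1$; this second telescoping leaves $\mathbb{E}f(x^{0})-\mathbb{E}f(x^{T})$, which is bounded above by $f(x^{0})-f(x^{*})$ because $x^{*}$ minimizes $f$. The left side is now a sum of exactly $K=mT$ squared-gradient terms, and since $x^{t}$ is returned uniformly at random from those $mT$ iterates, its expected squared gradient norm equals the average of the summands; dividing by $K\delta_{\delta}$ then gives the claimed bound. The main obstacle is not the telescoping itself, which is routine, but the careful bookkeeping of the Lyapunov boundary terms: one must verify that $c_{m}=0$ and $x_{0}^{t+1}=x^{t}$ are precisely what force the endpoints to collapse to pure function values, so that all the intermediate quadratic $\|\Expmapinv{x^{t}}{x_{i}^{t+1}}\|^{2}$ contributions cancel cleanly across both nested sums and only the function-value difference survives.
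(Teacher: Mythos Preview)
Your proposal is correct and matches the paper's approach: the paper simply states that the result follows by telescoping the Lyapunov bound from Lemma~\ref{lem:lyap} using $c_{m}=0$, and you have filled in precisely those details---replacing $\delta_{i}$ by $\delta_{\delta}$, summing over the inner loop so that $c_{m}=0$ and $x_{0}^{t+1}=x^{t}$ collapse the endpoints to function values, telescoping again over the outer loop, and invoking the uniform random return to pass from the sum to the expectation. There is no meaningful difference in method.
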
 This result can be shown by telescoping the bound derived in the previous lemma for the Lyapunov functions, using $c_{m}=0$.

\begin{proposition}
Let the sectional curvature of the manifold be bounded below by $c_{\delta}$, and the $f_{i}$ be $L$-smooth. Let $x^{*}$ be an optimum of $f(\cdot)$ in \eqref{eq:fsum}. Assume the eigenvalues of the inverse Hessian estimates are bounded. Set $\eta_{2}=\mu_{0}/\left(\Gamma Ln^{\alpha_{1}}\eta^{\alpha_{2}}\right)$, $K=mT$, and $m=\lfloor n^{\nicefrac{3\alpha_{1}}{2}}/\left(3\mu_{0}\zeta^{1-2\alpha_{2}}\right)\rfloor$, where $\alpha_{1}\in(0,1]$ and $\alpha_{2}\in[0,2]$. Then, for suitable choices of the inverse Hessian bounds $\gamma,\Gamma$, we can find values for the constants $\mu_{0}>0$ and $\epsilon>0$ so that the following holds:
\begin{align*}
\mathbb{E}\|\nabla f(x^{T})\|^{2}\leq(K\epsilon)^{-1}L\eta_{2}^{\alpha_{1}}\zeta^{\alpha_{2}}\left(f(x^{0})-f(x^{*})\right).
\end{align*}
\end{proposition}
\begin{proof}
We define $\beta=L\zeta^{1-\alpha_{2}}/\left(n^{\nicefrac{\alpha_{1}}{2}}\Gamma\right)$. Also, as mentioned in the proposition, $\eta_{2}=\mu_{0}/\left(\Gamma Ln^{\alpha_{1}}\eta^{\alpha_{2}}\right)$, with appropriate $\alpha_{1}$, $\alpha_{2}$. Note that we need a bound for $\delta_{\delta}$ to plug in the denominator of the bound in Lemma \ref{lem:lastbutone} above. This quantity can be lower bounded as follow:
\begin{align*}
\delta_{\delta}&=\min_{i}\delta_{i}\\
&=\min_{i}\left\lbrace \eta_{2}\gamma-\frac{c_{i+1}\eta_{2}\Gamma}{\beta}-L\eta_{2}^{2}\Gamma^{2}-2c_{i+1}\zeta^{2}\eta_{2}^{2}\Gamma^{2}\right\rbrace \\
&\geq\left\lbrace \eta_{2}\gamma-\frac{c_{0}\eta_{2}\Gamma}{\beta}-L\eta^{2}\Gamma^{2}-2c_{0}\zeta\eta_{2}^{2}\Gamma^{2}\right\rbrace.
\end{align*}
Now we need to bound $c_{0}$. To that end, telescoping the $c_{i+1}$ function defined in Lemma \ref{lem:lyap} above with $c_{m}=0$, and denoting $\theta=\eta_{2}\beta\Gamma+2\zeta\eta_{2}^{2}L^{2}\Gamma^{2}$, we get the following:
\begin{align*}
c_{0}=\frac{L\mu_{0}^{2}\left\lbrace\right(1+\theta)^m-1\rbrace}{n^{2\alpha_{1}}\zeta^{2\alpha_{2}}\theta}.
\end{align*} Using the definitions of $\eta_{2}$ and $\beta$ above, we note that $\theta <1/m$, implying $c_{0}\leq\frac{L\mu_{0}}{\zeta n^{\nicefrac{\alpha_{1}}{2}}}(e-1)$. Plugging this in the bound above, we posit that $\delta_{\delta}$ can be bounded below as follows:
\begin{align*}
\delta_{\delta}&\geq \eta_{2}\left\lbrace \gamma-\frac{\mu_{0}\Gamma(e-1)}{\zeta^{2-\alpha_{2}}}-\frac{\mu_{0}}{n^{\alpha_{2}}\zeta^{\alpha_{2}}}-\frac{2\mu_{0}^{2}(e-1)}{n^{\nicefrac{3\alpha_{1}}{2}}\zeta^{\alpha_{2}}}\right\rbrace \\
&\geq \frac{\epsilon}{Ln^{\alpha_{1}}\zeta^{\alpha_{2}}},
\end{align*} for some sufficiently small $\epsilon$, and suitable choices of the inverse Hessian bounds $\gamma,\Gamma$ and the rest of the parameters. Using this bound in the denominator of the right hand side of Lemma \ref{lem:lastbutone} above completes the proof.
\end{proof}

\subsection{Retractions}
We approximated the exponential maps with retractions from the Manopt \cite{manopt} toolbox. We used the following formulae: $\mathbb{R}_{x}(\eta\rho)=x\cos\|\eta\rho\|_{F}+\frac{\eta\rho}{\|\eta\rho\|_{F}}\sin\|\eta\rho\|_{F}$ for the sphere manifold, and $\mathbb{R}_{x}(\eta\rho)=x\cdot M_{x}(x \setminus \eta\rho)$ for the manifold of symmetric PD matrices, where $M_{x}$ denotes the matrix exponential, and $\setminus$ is matrix division. Here $x\in\mathcal{M}$ is some point on the manifold, $\rho\in T_{x}\mathcal{M}$ is some descent step evaluated at $x$, and $\eta$ is the stepsize. 

\end{document}